\documentclass{article}
\usepackage{amssymb, amsthm, amsmath}
\usepackage{bm}
\usepackage{tikz}
\usepackage{xcolor}
\usetikzlibrary{calc}
\usetikzlibrary{decorations.pathmorphing,shapes}
\tikzstyle{v} = [draw, circle, fill=white, inner sep=2pt]
\usepackage{todonotes}

\usepackage{hyperref}
\usepackage[font=small,labelfont=bf, width=0.8\linewidth]{caption}

\usepackage{stmaryrd}
\theoremstyle{plain}
\newtheorem{thm}{Theorem}[section]
\newtheorem{theorem}[thm]{Theorem}
\newtheorem{lemma}[thm]{Lemma}

\newtheorem{proposition}[thm]{Proposition}

\newtheorem*{claim*}{Claim}
\newtheorem*{thm*}{Theorem}

\newcommand{\incs}{\subsetneq}
 \usepackage{mathtools}

\theoremstyle{definition}
\newtheorem{definition}[thm]{Definition}
\newtheorem{convention}[thm]{Convention}

\newtheorem{remark}[thm]{Remark}

\newtheorem{example}[thm]{\sc Example}

\newcommand{\inc}{\subseteq}
\newcommand{\union}{\cup}	
\newcommand{\Union}{\bigcup}	
\newcommand{\set}[1]{\{#1\}}
\newcommand{\setc}[2]{\set{#1 \mid #2}}
\newcommand{\inter}{\cap}
\newcommand{\hyper}[1]{{\bf #1}}
\newcommand{\restrconstr}[2]{{#1}_{{}^\lceil {#2}}}

\newcommand{\restrH}[2]{\hyper{#1}\backslash #2}

\newcommand{\bcdot}{\,{\bm \cdot}\,}

 \newcommand{\doublearrow}[1]{\xrightarrow[]{#1}\mathrel{\mkern-14mu}\rightarrow}
 \newcommand{\Friezo}{\hyper{F}}

\usepackage{float}
\usepackage[backend=bibtex]{biblatex}
  \title{Tridendriform algebras on hypergraph polytopes, the other way around}
  
  \usepackage{authblk}

\author[$^\dagger$]{  Pierre-Louis Curien}
\affil[$^\dagger$]{{\footnotesize Université Paris Cité,   CNRS, Inria, IRIF, Picube project-team,   Paris, France}}

\author[$^\ast$]{Bérénice Delcroix-Oger}
\affil[$^\ast$]{\footnotesize Université de Montpellier, CNRS, IMAG, Montpellier, France}

\author[$^\ddagger$]{Jovana Obradovi\' c } 
\affil[$^\ddagger$]{\footnotesize Mathematical Institute of the Serbian Academy of Sciences and Arts, Belgrade, Serbia}

\begin{document}
\maketitle

\begin{abstract}
\noindent Hypergraph polytopes (or nestohedra) form a broad class of polytopes obtained by truncating faces of a simplex according to a hypergraph. In earlier work, the authors constructed $q$-tridendriform algebras on the set of faces of certain families of hypergraph polytopes, including associahedra and permutohedra. The well-definedness of these   structures relied on a connectedness property on the hypergraphs involved, called strictness. Nevertheless, notable examples of hypergraph polytopes such as   cyclohedra fell outside this setting. We introduce a new connectedness condition, called anti-strictness, which goes opposite to strictness and captures a different class of hypergraph polytopes, including associahedra, permutohedra and cyclohedra. Our main result produces natural $(-1)$-tridendriform algebras in the  anti-strict framework, which match previously introduced tridendriform algebras in the overlap of the two frameworks, thereby extending the range of hypergraph polytopes admitting such algebraic structures. Our work is in fact formulated in an unbiased (or varying arity) version of the tridendriform structure, that we coined polydendriform in our previous work. {As a side contribution, we offer an operadic framework that crystallises this notion.}
\end{abstract}

\section*{Introduction}
In the previous work \cite{PLBJ1}, 
we introduced $q$-tridendriform algebras (cf. \cite{BR}) on the sets of faces of certain families of hypergraph polytopes (a.k.a. nestohedra \cite{P09}), encompassing the classical constructions  on associahedra and permutohedra due to Loday-Ronco and Palacios-Ronco (namely, shuffles of trees and shuffles of surjections). Hypergraph polytopes are the   generalisation of graph associahedra which allows (hyper)edges of arbitrary cardinality, not only of cardinality 2. 
Just like graph associahedra, hypergraph polytopes admit a purely combinatorial description of their face posets. It is this description that underlies the  algebraic structures that we study. 

\smallskip

 The general construction is the following.  We consider a finite family of connected hypergraphs $\{\hyper{H}_a \,|\, a\in A\}$ with pairwise disjoint vertex sets $H_a$, together with a connected hypergraph $\hyper{H}$ whose vertex set $H$ is the disjoint union of the sets $H_a$. These data form  what we call a {\it preteam}. A preteam 
serves as the arity 
of an operation that   takes as input an $A$-indexed  family of faces $C_a$, where  each $C_a$ is a face of the hypergraph polytope associated with $\hyper{H}_a$,  and returns a face of the hypergraph polytope associated with  $\hyper{H}$.  Conceptually, this operation is   a kind of generalised shuffle:  the input faces may be thought of as   structured ``sets of cards" that are interweaved according to the combinatorics of $\hyper{H}$. 

\smallskip

 For our general theory to work, we had to fix some condition on preteams. This condition, that we called {\it strictness}, requires that whenever $K$ is a connected subset of $H_a$ (according to  the hypergraph structure $\hyper{H}_a$), then $K$ remains  connected when viewed inside $\hyper{H}$.  We have also designed
a slight weakening of the strict framework, in terms of the  {\it quasi-strictness} condition, allowing us to encompass simplices and hypercubes. 

\smallskip

However, these frameworks did not cover certain natural and important families of polytopes. In particular, cyclohedra \cite{Markl-Cyc,Simion} could not be accommodated, despite the fact that an 
explicit tridendriform algebra structure could be constructed for them by hand. In our search for yet another ``variant" of our condition that would encompass the case of cyclohedra,
we discovered instead a different general and {\it{incomparable}} framework. Interestingly, the new framework reverses the direction of connectivity preservation: it requires that, whenever $K$ is a connected subset of $H$ (according to the hypergraph structure $\hyper{H}$), then $K\cap H_a$ is also connected in $\hyper{H}_a$, for all $a$ such that $K\cap H_a$ is non-empty. 
We call this condition {\it{anti-strictness}}. The resulting framework allows us to treat cyclohedra, as well as other examples, such as associahedra assembled by insertion rather than by concatenation of the underlying linear graphs.  It also turns out that many examples previously studied in \cite{PLBJ1} still fit in the anti-strict setting as well, in the sense that they are both strict and anti-strict.  Moreover, the proof that we get a polydendriform structure (which is the varying-arity version of tridendriform structure) is nicer, and even the definition of the shuffle product is more elegant, and coincides with the one in  \cite{PLBJ1} for families happening to be both strict and anti-strict.

\smallskip

This paper reports on these new findings. It can be read mostly independently from our previous work, but we refer to it (and to \cite{COI}, as well as to the abundant literature on graph associahedra) for a more leisurely introduction to hypergraph polytopes and their geometric realisation as truncations of   simplices.

\smallskip

The paper is organised as follows. In Section \ref{reminders-hypergraphs-section}, we recall basic facts about hypergraph polytopes and their combinatorial description in terms of  tree-like  objects called constructs. We also review and discuss the notion of restriction of a construct,  which  plays an essential role in the sequel. In Section \ref{anti-strict-section}, we define  anti-strict  teams and clans, and 
give examples that in particular reflect the incomparability of strict and anti-strict frameworks. In Section \ref{shuffle-section}, we introduce a  shuffle product for anti-strict clans, and  prove our main associativity result. {We also place it in an operadic context.}
  Section \ref{comparison-section} compares our framework with related constructions in the literature, in particular those of Ronco, Chapoton, Forcey and Springfield.  We conclude with an appendix presenting a non-inductive characterisation of restriction of a construct, which may be of independent interest.

\section{Hypergraph polytopes}
\label{reminders-hypergraphs-section}
In this section, we recall the basic combinatorial framework of hypergraph polytopes, fixing notation and terminology. Sections \ref{hypergraphs-subsection}, \ref{constructs-subsection}   and 
\ref{restriction-subsection} contain the definitions of hypergraphs, their constructs and  restrictions thereof (defined inductively), respectively, while Section \ref{fundamental-examples-section} provides fundamental examples of hypergraph polytopes. 
Most of the  material in these four sections   follows closely the earlier work  \cite{PLBJ1} and is recalled here for completeness. Section \ref{restriction-characterisation-subsection}  introduces a non-inductive characterisation of restriction, which is new.

\subsection{Hypergraphs} \label{hypergraphs-subsection}
 
A hypergraph   consists of a  set  $H$ of {\em vertices}  and a subset 
$\hyper{H}\inc {\mathcal{P}}(H)\backslash\emptyset$ such that $\Union \hyper{H}=H$, whose elements are called  \emph{hyperedges}.  In addition, we   assume that 
 $\set{x}\in \hyper{H}$, for all $x\in H$ (in terminology of \cite{DP} and \cite{PLBJ1}, our hypergraphs are {\it{atomic}}). Throughout the paper, we shall use the same bold symbol $\hyper{H}$ to denote both an  entire hypergraph  and its set  of hyperedges. A hyperedge of cardinality 2 is called an \emph{edge}.  Note that any ordinary graph $(V,E)$ can be viewed as the hypergraph
$\setc{\set{v}}{v\in V} \union \setc{e}{e\in E}$ (with no hyperedge  of cardinality $\geq 3$).

\smallskip
 
Given a hypergraph $\hyper{H}$    and  a subset $X\inc H$, we define the restriction of $\hyper{H}$ to $X$ as the hypergraph
$\hyper{H}_X:=\setc{Z}{Z\in \hyper{H}\;\mbox{and}\; Z\inc X}$, and we introduce the abbreviation $\restrH{H}{X}:=\hyper{H}_{H\backslash X}$. We say that $\hyper{H}$ is \emph{connected} if there is no non-trivial partition $H=X_1\union X_2$ such that $\hyper{H}=\hyper{H}_{X_1}\union \hyper{H}_{X_2}$, and that $X\inc H$ is connected in $\hyper{H}$
if $\hyper{H}_X$ is connected.
For each finite hypergraph there exists a unique partition
$H=X_1\union\ldots\union X_m$ such that each $\hyper{H}_{X_i}$ is connected and $\hyper{H}=\Union(\hyper{H}_{X_i})$.  The $\hyper{H}_{X_i}$ are called  the \emph{connected components} of $\hyper{H}$. If $\hyper{H}_1,\ldots,\hyper{H}_n$ are  the
 connected components of $\restrH{H}{X}$, we write
$\hyper{H},X  \leadsto \hyper{H}_1,\ldots, \hyper{H}_n$.
 
 \begin{convention} \label{restriction-convention}
 If $H$ and $Z$ are subsets of a common subset and $H\cap Z\neq\emptyset$, we shall often simply write
 $\hyper{H}_Z$ for $\hyper{H}_{Z\cap H}$.
 \end{convention}

\subsection{Constructs} \label{constructs-subsection}
As shown by  Do\v sen and Petri\'c~\cite{DP},  the non-trivial connected subsets of a finite connected hypergraph ${\bf H}$ 
encode the truncation data for a simplex with  $|H|$ vertices. 
The faces of the resulting polytope  admit a combinatorial description in terms of certain
non-planar trees, called \emph{constructs}, whose nodes are decorated by non-empty subsets of $H$. We now recall their recursive definition, using the syntax introduced in \cite{COI}.

\smallskip

Let  $\emptyset\neq Y\subseteq H$. If   $\hyper{H},Y  \leadsto \hyper{H}_1,\ldots, \hyper{H}_n$, and if  $T_1,\ldots,T_n$ are constructs of $\hyper{H}_1,\ldots,\hyper{H}_n$, respectively, then the tree obtained by grafting $T_1,\ldots,T_n$ on the root node decorated by $Y$, denoted by $Y(T_1,\ldots,T_n)$ (or sometimes $Y\setc{T_i}{1\leq i\leq n}$), is a construct of  $\hyper{H}$. We write $Y=\mbox{root}(Y(T_1,\ldots,T_n))$. 
 The base case is when $Y=H$ (and hence $n=0$): then the one-node tree $H()$ (written simply $H$) is a construct.
We write $T:\hyper{H}$ to denote that $T$ is a construct of $\hyper{H}$.

\begin{convention}
We   identify the vertices of constructs with the sets decorating them. In addition, to simplify notation, we omit braces around singleton vertices. For example, instead of 
$\{x\}(\{u,v\},\{y\})$   and  $\{x\}(\{u\}(\{v\}),\{y\})$, we shall write $x(\{u,v\},y)$ and $x(u(v),y)$, respectively.
\end{convention}

In the geometric realisation, the dimension of the face encoded by a construct $T$ is  $\sum_X (|X|-1)$, where
$X$ ranges over all nodes of $T$. In particular, the vertices of the polytope realising $\hyper{H}$ are the constructs of $\hyper{H}$ whose nodes are all singletons. They are called \emph{constructions}.

\smallskip

Constructs can be ordered by edge contractions. More precisely, given a construct $S$ with two adjacent nodes 
$X,Y$, we can contract the edge connecting them into a single node labelled $X\cup Y$. This captures exactly the immediate subface relation in the geometric realisation. 

\smallskip

The polytopes realised in this way are  called \emph{hypergraph polytopes}, after Do\v sen and Petri\' c \cite{DP}, or \emph{nestohedra},  after Postnikov \cite{P09}.  
In the special case where $\hyper{H}$ is a graph,  we recover the extensively investigated notion of graph associahedra. 

\subsection{Fundamental examples of hypergraph polytopes} \label{fundamental-examples-section}

\smallskip
The polytopes for the complete graphs
$$\hyper{P}^X= \set{\set{x_1},\ldots,\set{x_n},\set{x_1,\ldots x_n}} \cup \setc{\set{x_i,x_j}}{1\leq i\neq j\leq n},$$
with  $X=\set{x_1,\ldots,x_n}$, are the $(n-1)$-dimensional \emph{permutohedra}. Since each non-empty subset of $X$ is connected in $\hyper{P}^X$, the constructs of $\hyper{P}^X$ correspond to ordered partitions of $X$, or, equivalently,  to surjections $X\doublearrow{}\{1,\dots,k\}$.

\smallskip
For a totally ordered set $X=\{x_1<\dots < x_n\}$, the polytope for the hypergraph 
$${\bf K}^{X}=\{\{x_1\},\dots,\{x_n\},\{x_1,x_2\},\dots,\{x_{n-1},x_n\}\}$$
is called the $(n-1)$-dimensional \emph{associahedron}. The constructs of ${\bf K}^{X}$ are in one-to-one correspondence with  partially parenthesised words, or, equivalently, with  Schr\" oder trees with at least two leaves  (the equivalence of these three characterisations of the faces of the associahedra is illustrated in \cite[Section 2.4]{COI}).

\smallskip

For a finite cyclically ordered set $X=\set{x_1<\ldots<x_n<x_1}$,  the polytope for the hypergraph
$${\bf O}^X=\set{\set{x_1},\ldots,\set{x_n},\set{x_1,x_2},\ldots,\set{x_{n-1},x_n},\set{x_n,x_1}}$$ is called the $(n-1)$-dimensional \emph{cyclohedron}.
The constructs of ${\bf O}^X$ correspond to   partially parenthesised cyclic words, through their equivalence with tubings or nested sets (see \cite[Proposition 2]{COI} and \cite[Lemma 1.4]{Markl-Cyc}).
 
\smallskip 
All examples so far are graph associahedra. There are two fundamental examples of genuine hypergraph polytopes, with which we close our list for now (later, we shall encounter erosohedra, friezohedra, stellohedra and teleassociahedra).

\smallskip

For  a finite set $X$, the $(|X|-1)$-dimensional   \emph{simplex} is encoded by the hypergraph   $$\hyper{S}^X=\setc{\set{x}}{x\in X}\union\set{\set{X}},$$ having only trivial hyperedges (i.e., the vertices and the maximal hyperedge ensuring that the hypergraph is connected). The constructs have the form $Y\{x_1,\dots,x_k\}$, where $\emptyset\incs Y\inc X$ and $\{x_1, \ldots, x_k\}=X\backslash Y$.

\smallskip
For a finite ordered set $X=\set{x_1<\cdots<x_n}$, the polytope for the hypergraph 
 $$\hyper{C}^X=\{\{x_1\},\dots,\{x_n\}\}\cup\setc{\setc{x_j}{1\leq j\leq i}}{1\leq i\leq n}.$$
is the $(n-1)$-dimensional \emph{hypercube}. The constructs of $\hyper{C}^X$ are in one-to-one correspondence with the set of words of
length $n$ over the alphabet $\{+,-,\bullet\}$, starting with $+$ (see \cite[Example 3.7]{PLBJ1} for details).

\subsection{Restriction} \label{restriction-subsection}
We recall below the definition of the restriction $\restrconstr{C}{K}$ of a construct  $C:{\bf H}$ to a non-empty connected subset $K$ of $H$.
 \begin{definition}[Restriction of a construct] \label{restriction-definition}
For a hypergraph ${\bf H}$, a non-empty subset $K\subseteq H$ such that the hypergraph ${\bf K}:={\bf H}_K$ is connected, and a construct $C:{\bf H}$,   \emph{the restriction of $C$ to $K$} is  the construct $C_{\lceil K}:{\bf K}$ defined as follows:
\begin{itemize}
\item if $C=H$, then $C_{\lceil K}=K$, 
\item if $C=X(C_1,\dots,C_n)$, where $\emptyset \neq X \subsetneq H$, ${\bf H},X\leadsto {\bf H}_1,\dots ,{\bf H}_n$ and $C_{i}:{\bf H}_{i}$ for $1\leq i\leq n$, and  
\begin{itemize}
\item if $X\inter K=\emptyset$, then there exists  $i\in\{1,\dots,n\}$ such that ${\bf K}$ is connected in ${\bf H}_i$, and we set   $C_{\lceil K}={(C_{i})}_{\lceil_{K}}$;
\item otherwise,  supposing that ${\bf K},(X\inter K)\leadsto K_1,\dots,K_p$, we have that  for each $j\in\{1,\dots,p\}$  there exists  $i\in\{1,\dots,n\}$, such that $K_j$ is connected in $H_i$; we denote by $\psi:\{1,\dots,p\}\rightarrow \{1,\dots,n\}$  the induced index correspondence, and we set $$C_{\lceil_{K}}=(X\inter K)((C_{\psi(1)})_{\lceil_{K_1}},\dots,(C_{\psi(p)})_{\lceil_{K_p}}).$$
\end{itemize}
\end{itemize}
\end{definition}
A non-inductive characterization of the notion of restriction   of a construct  can be found in the Appendix. 
\begin{remark}\label{rest1}
For connected subsets $\emptyset \neq L\subseteq K$ and $\emptyset\neq K\subseteq H$  and a construct $C:{\bf H}$,  it holds that ${(C_{\lceil_K})}_{\lceil_L}=C_{\lceil_L}$.
\end{remark}

In the following example, we illustrate a situation in which the restriction of a construct ``breaks'' the ancestry relation.  
\begin{example}
For the hypergraph $${\hyper{H}}=\{\{1\},\{2\},\{3\},\{4\},\{1,2\},\{2,3\},\{3,4\},\{1,4\}\},$$ the construct $U=1(2(3(4)))$ of ${\bf H}$ and the connected subset $K=\{1,2,4\}$ of ${\bf H}$, we have that $\restrconstr{U}{K}=1(2,4)$. Therefore, the node  $2$ used to be strictly below the node  $4$ in $U$, while, in $\restrconstr{U}{K}$, these two nodes became incomparable. 
\end{example}
\section{Anti-strict teams}\label{anti-strict-section} 

In this section, we introduce the main notion of the paper: {\em anti-strict teams}. We also introduce the corresponding notion of anti-strict clans and present examples illustrating the incomparability   between strict and anti-strict frameworks. 

\smallskip

We begin by recalling from~\cite{PLBJ1}   that a preteam $\tau$ consists of
 a finite collection   $\setc{\hyper{H}_a}{a\in A}$ of connected hypergraphs with disjoint sets of vertices,  together with
a  connected hypergraph $\hyper{H}$, such that $H=\Union_{a\in A}H_a$.
We use the notation $\tau=(\setc{\hyper{H}_a}{a\in A},\hyper{H})$ and we refer to the $\hyper{H}_a$'s as the participating hypergraphs of $\tau$ and to $\hyper{H}$ as the coordinating hypergraph  of $\tau$.

\begin{definition}[Anti-strict team] \label{anti-strict-definition}
A preteam $\tau=(\{{\bf H}_a\,|\,a\in A\},{\bf H})$ is called an \emph{anti-strict team} if it satisfies the following property:
\begin{itemize}
\item[(AS)] for every subset  $\emptyset\neq K\inc H$, if $K$ is connected in $\hyper{H}$, then for all $a\in A$ such that $H_a\inter K\neq\emptyset$,   $H_a\inter K$ is connected in $\hyper{H}_a$.
\end{itemize} 
\end{definition}

The notion of strictness and anti-strictness can be illustrated in a cobordism-like manner. We do this in Figure \ref{figInt}.

\begin{figure}[H] 
\centering
\begin{tabular}{ccc}
\includegraphics[height=2.5cm]{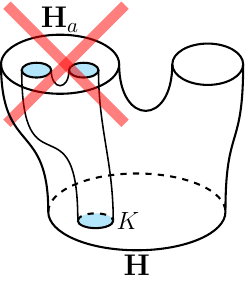} & \hspace{2cm} &
\includegraphics[height=2.5cm]{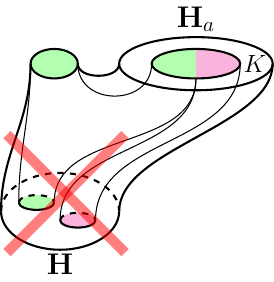} \\
(AS) & & (S)
\end{tabular}
\caption{A preteam can be represented as a cobordism whose upper and lower boundary disks
feature the participating and coordinating hypergraphs, respectively. The forbidden configuration in the anti-strict case  (AS)  is given by a connected subset $K$ in the lower disk which disconnects into more than one connected component in some participating hypergraph ${\bf H}_a$.   The forbidden configuration in the strict case (S) is given by a    connected subset $K$ in one of the upper disks which disconnects into more than one connected component in the coordinating hypergraph {\bf H}.}\label{figInt}
\end{figure}

The following lemma provides an alternative characterisation of anti-strict team.
Let $\tau=(\{{\bf H}_a\,|\,a\in A\},{\bf H})$ be a preteam. Pick  a   subset $\emptyset\neq B\subseteq A$ and  for each $b\in B$  a subset $\emptyset\neq X_b\subseteq H_b$, and
consider the decompositions
\begin{equation}\begin{array}{l}\hyper{H}_b, X_b \leadsto \hyper{H}_{(b,1)},\ldots ,\hyper{H}_{(b,n_b)}\\
\hyper{H},X_B\leadsto \hyper{H}_1^B,\ldots,\hyper{H}_{n_B}^B\quad(\textrm{where}\; X_B=\Union_{b\in B}X_b).
\end{array}\label{decomp}\end{equation}

\begin{lemma} \label{daggerbis}
A preteam $\tau=(\{{\bf H}_a\,|\,a\in A\},{\bf H})$ is an anti-strict team if and only if the following property holds: for each choice of  subsets $\emptyset\neq B\subseteq A$ and   $\emptyset\neq X_b\subseteq H_b$  for which $n_B=1$ in the decomposition \eqref{decomp}, we have that, for each $b\in B$, either $n_b=0$ or $n_b=1$.
\end{lemma}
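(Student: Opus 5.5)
Both directions come down to comparing $H\backslash X_B$ with the sets $H_b\backslash X_b$. We first fix the interpretation of $n_B=1$: it means $K:=H\backslash X_B$ is non-empty and connected in $\hyper{H}$. Similarly $n_b$ is the number of connected components of $\restrH{H_b}{X_b}$, so $n_b=0$ exactly when $X_b=H_b$, and $n_b\leq 1$ exactly when $H_b\backslash X_b$ is either empty or connected in $\hyper{H}_b$.

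\emph{(AS) implies the property.} Take $B$ and $(X_b)_{b\in B}$ with $n_B=1$, and set $K=H\backslash X_B$, which is connected in $\hyper{H}$. Fix $b\in B$. Since the vertex sets $H_a$ are pairwise disjoint and $X_B\cap H_b=X_b$, we have $K\cap H_b=H_b\backslash X_b$. If this set is empty, then $n_b=0$. Otherwise (AS), applied to $K$ and $a=b$, says that $H_b\backslash X_b$ is connected in $\hyper{H}_b$, so $n_b=1$.

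\emph{The property implies (AS).} Let $\emptyset\neq K\inc H$ be connected in $\hyper{H}$, and let $a$ satisfy $H_a\cap K\neq\emptyset$. If $K=H$, then $H_a\cap K=H_a$, which is connected by the preteam assumption, so we may assume $K\neq H$. We want to realise $K$ as a complement $H\backslash X_B$. Put $B=\setc{b\in A}{H_b\not\inc K}$ and $X_b=H_b\backslash K$ for $b\in B$. Then $B\neq\emptyset$ because $K\neq H$, each $X_b$ is non-empty by the definition of $B$, and $X_B=H\backslash K$. So $H\backslash X_B=K$ is connected, which gives $n_B=1$. Now split on $a$. If $a\notin B$, then $H_a\inc K$, so $H_a\cap K=H_a$ is connected. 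If $a\in B$, the property gives $n_a\in\{0,1\}$. Since $H_a\backslash X_a=H_a\cap K\neq\emptyset$, we cannot have $n_a=0$, so $n_a=1$, which means $H_a\cap K$ is connected in $\hyper{H}_a$.

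The only delicate point is this choice of $B$ in the converse. Because the property demands that every $X_b$ be non-empty, we cannot simply take $B=A$. Participating hypergraphs contained entirely in $K$ have to be excluded from $B$ and then handled separately, using the fact that each $\hyper{H}_a$ is connected. Everything else in the argument is bookkeeping with the disjoint union $H=\Union_{a\in A} H_a$.
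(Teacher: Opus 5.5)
Your proof is correct and is essentially the paper's argument. The paper likewise identifies the data $(B,(X_b)_{b\in B})$ with a single non-empty $X=H\setminus K$, so that $B=\{b\in A\mid H_b\cap X\neq\emptyset\}$, which is exactly your choice of $B$. It then reads $n_B=1$ and $n_b\in\{0,1\}$ as the connectedness of $K$ in $\hyper{H}$, and as the emptiness or connectedness of $H_b\cap K$ in $\hyper{H}_b$. You additionally spell out two edge cases the paper leaves implicit, namely $K=H$ and participating hypergraphs with $H_a\subseteq K$. You handle both correctly using the connectedness of the $\hyper{H}_a$ that is built into the definition of a preteam.
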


\begin{proof}
    The statement of the lemma  is a matter of   mere  repackaging of Definition \ref{anti-strict-definition}. More precisely, it follows   from the following sequence of observations:
\begin{itemize}  
\item The data of $B$ and of the sets  $X_b$ amount to the data of some non-empty $X\inc H$ (and then $B=\setc{b\in A}{H_b\inter X\neq\emptyset}$). 
The assumption $n_B=1$ amounts to say that $X=H\setminus K$ for a connected subset $K$ in $\hyper{H}$. 
\item The assumption $n_b=0$ amounts to say that $H_b \inter K$ is empty, while the assumption $n_b=1$ says that
$H_b \inter K$ is connected in $\hyper{H}_b$.
\end{itemize}
 \end{proof}

  We next define the notion of universe and of anti-strict clan. Informally, it is a collection of anti-strict teams satisfying a closure condition that  will ensure the correctness of our definition of shuffle product in the next section.
To proceed, we need a notation.
For an anti-strict team $\tau=(\{{\bf H}_a\,|\, a\in A\},{\bf H})$ and a connected subset $\emptyset \neq K\subseteq H$,  we define an induced preteam $\tau_K$ as follows  (using Convention \ref{restriction-convention}):
\begin{equation}\tau_K=(\{{(\hyper{H}_{{a}})_{K}\,|\,  a \in  A\;\mbox{and}\;H_{{a}}\inter K\neq\emptyset\},{\bf K}}).\label{lll}\end{equation}

\begin{definition}  \label{antistrict-clan-definition}
A \emph{universe} is a collection $\mathfrak{U}$ of connected hypergraphs. An \emph{anti-strict clan} on some universe $\mathfrak{U}$ is a set $\Xi$ of anti-strict teams such that, for each $\tau\in\Xi$, the participating hypergraphs and the  coordinating hypergraph of  $\tau$  belong to $\mathfrak{U}$, and which is closed under decomposition, in the sense that,   $\tau_K\in \Xi$ for each  connected subset  $\emptyset\neq K\subseteq H$.
\end{definition}

We now present two fundamental examples of anti-strict clans. 

\begin{example}[The clan of trees of associahedra]\label{treesassoc}
As universe, we fix the class of  all associahedra. We are going to describe preteams in terms of planar trees with leaves, as follows. Let $T$ be a rooted planar tree with  set of nodes $A$. For $a\in A$, denote by $l(T,a)$ the   set of leaves of $T$ attached to $a$, endowed with the total order induced by planarity. Our preteams have the form $$(\{{\bf K}^{l(T,a)}\,|\,a\in A\}, {\bf K}^{l(T)}),$$ where $l(T)$ is the set of all leaves of $T$, again ordered by planarity. In order to see that such a preteam is an anti-strict team, we note that each connected subset $\emptyset\neq K\subseteq {K}^{l(T)}$ is a  subinterval of $l(T)$, which entails, by planarity, that for each $a\in A$, $l(T,a)\cap K$ is an interval on $l(T,a)$ (or empty), and hence connected in ${\bf K}^{l(T,a)}$. We define the clan of trees of associahedra as the set of all teams defined as above. It remains to verify that this set of teams is closed under restriction to connected subsets.   Indeed, each connected $\emptyset \neq K\subseteq K^{l(T)}$ is an interval  on  $l(T)$, which in turn  determines a subtree $T'$ of $T$ having precisely $K$ as its set of leaves. It is easily seen that  $\tau_K$ is determined precisely by $T'$, and hence belongs to the clan. 

We note that this clan is not strict. Indeed, consider  the tree $T$ with two nodes $a$ and $b$, with  root $a$ having as inputs   leaf 1, the edge from $b$ and  leaf 4 (in this order), and  $b$ has no incoming internal edge and has 2,3 (in this order) as leaves. We can write $T$  in term form as $a(1,b(2,3),4)$.

 Then in the corresponding preteam we have that $\set{1,4}$ is connected in ${\bf K}^{l(T,a)}={\bf K}^{\set{1,4}}$ but is not connected in ${\bf K}^{l(T)}={\bf K}^{\set{1,2,3,4}}$.
\end{example}
 
\begin{example}[The clan of trees of associahedra and cyclohedra]
\label{excyc} 
Recall from Section \ref{fundamental-examples-section} the definition of cyclohedra.
Notice that, for each connected subset $\emptyset \subsetneq H\subsetneq O^X$, we have that $({\bf O}^X)_H={\bf K}^H$, where $H$ is endowed with the total order induced from $X$. Our universe will be formed by  all associahedra and  cyclohedra. We shall describe preteams of two different forms: one in terms of planar rooted trees, for which the coordinating hypergraph is an associahedron, and the other one in terms of planar non-rooted trees,  for which the coordinating hypergraph is a cyclohedron. 
\begin{itemize}
\item[(A)] Let, like in Example \ref{treesassoc},  $T$ be a rooted planar tree with  set of nodes $A$ and with the set of leaves $l(T)$, ordered by planarity. In addition, denote by $l(T,a)$  the   set of leaves of $T$ attached to $a\in A$. Our preteams will have the form   $$(\{{\bf H}^{l(T,a)}\,|\,a\in A\}, {\bf K}^{l(T)}),$$ where each ${\bf H}^{l(T,a)}$ is either ${\bf K}^{l(T,a)}$, with ${l(T,a)}$  endowed with the total order induced by planarity, or  ${\bf O}^{l(T,a)}$, in which case $l(T,a)$  is cyclically ordered by extending in the obvious way the total order given by planarity.

\item[(B)] In order to describe the preteams whose coordinating hypergraph is a cyclohedron, we start with an unrooted planar tree $U$ with the set of nodes $A$ and the set of leaves $l(A)$, cyclically ordered by planarity. The set $l(U,a)$ of leaves  of $U$ attached to $a\in A$ comes equipped with the induced cyclic order. The corresponding preteams now have the form $$(\{{\bf O}^{l(U,a)}\,|\,a\in A\}, {\bf O}^{l(T)}).$$  
\end{itemize}
In order to show that  preteams of both forms are indeed   anti-strict teams, we use exactly the same argument as in  Example \ref{treesassoc}. We define the clan of trees of associahedra and cyclohedra as the set of all these teams.    Our goal now is to show that both forms of teams are closed under restrictions to connected subsets.

 For teams of form (A), as in Example \ref{treesassoc}, the choice of a connected subset $\emptyset\neq K\subseteq K^{l(T)}$ determines the subtree $T_K$ of $T$ such that $l(T_K)=K$. Let us define $$\tau_{T_K}:=(\{{\bf H}^{l(T_K,a)}\,|\, a\in \mbox{nodes}(T_K)\},{\bf K}),$$ where
$${\bf H}^{l(T_K,a)}=\begin{cases}
{\bf O}^{l(T_K,a)},  & \mbox{if } {\bf H}^{l(T,a)}={\bf O}^{l(T,a)} \mbox{ and }   l(T_K,a)= l(T,a) \\
{\bf K}^{l(T_K,a)}, & \mbox{otherwise}.
\end{cases}$$
This definition is motivated as follows. If ${\bf H}^{l(T,a)}={\bf O}^{l(T,a)}$ and $l(T_K,a)\subsetneq l(T,a)$, then ${\bf O}^{l(T,a)}\lceil_{ l(T_K,a)}={\bf K}^{l(T_K,a)}$. Then it is easy to verify that $\tau_{T_K}$ is precisely $\tau_K$, and hence, $\tau_K\in \Xi$.

 For teams of form (B),    the choice of a connected subset $\emptyset\neq K=\{k_1<\dots<k_p\}\subseteq O^{l(T)}$ determines the subtree $U_K$ of $U$ such that $l(U_K)=K$. Let $T_{K}$ be the rooted tree obtained from $U_K$ by rooting it at the node $a\in A$ for which $k_1\in l(U_K,a)$, in such a way that the  planar order of the leaves of $T_K$ reads precisely $k_1<\cdots <k_p$. We define $\tau_{T_K}$ in exactly the same way as above and conclude by the same argument.
 
 We conclude by noting that this example is not strict either since it contains the clan of Example~\ref{treesassoc}.
\end{example}

 We shall give more examples of anti-strict clans, as well as two examples of strict but not anti-strict clans, in Section \ref{strict-and-antistrict-coincidence-section}.

\section{Polydendriform shuffle product} \label{shuffle-section}
In Section \ref{shuffle-definition-section}, we  define our shuffle product for anti-strict clans and provide examples. Our definition uses a fixed real parameter $q$. In Section \ref{associativity-section}, we prove that,  by specialising to $q=-1$,  our product becomes  associative. More precisely, this associative structure is decomposed into a finer structure, called polydendriform.
In Section \ref{strict-and-antistrict-coincidence-section}, we show that the present shuffle product coincides with the one defined in \cite{PLBJ1} for clans that are both strict and anti-strict. {In Section \ref{operadic-section}, we place our work  in an operadic context.}

\subsection{Definition and examples}
 \label{shuffle-definition-section}
Let $\tau:=(\setc{\hyper{H}_a}{a\in A},\hyper{H})$ be a preteam. Recall from \cite{PLBJ1} that a {\it delegation of support} $\tau$ is a family of constructs $\delta=\{C_a:{\bf H}_a\,|\, a\in A\}$, one for each participating hypergraph. We write $\delta = (\{C_a:{\bf H}_a\,|\, a\in A\},{\bf H})$ to take note of the coordinating hypergraph ${\bf H}$ as well.
We first define, for each connected subset $\emptyset\neq K\subseteq H$, the restriction of $\delta$ to $K$ as the delegation $$\delta_{\lceil_{K}}\, :=\, (\{{C_a}_{\lceil_{H_a\cap K}}\,|\, a\in A,\, H_a\cap K\neq \emptyset\},{\bf K})$$ of $\tau_K$. Suppose that
$\mbox{root}(C_a)=X_a$, for all $a\in A$. For a non-empty subset $\emptyset\neq B\subseteq A$, we set $X_B:=\bigcup_{b\in B}X_b$ and, supposing that ${\bf H},X_B\leadsto {\bf H}_1,\dots,{\bf H}_n$, we define recursively
$$\ast_B(\delta)=X_B(\ast(\delta_{\lceil_{H_1}}),\ldots,\ast(\delta_{\lceil_{H_n}})).$$ Finally, the  {\it   shuffle product}  (or   product) of   $\delta$ is  defined by
\begin{equation} \label{shuffle-antistrict-def} 
\ast(\delta)= \sum_{\emptyset\neq B\subseteq A} q^{{|B|}-1}\ast_B(\delta),
\end{equation}
where $q$ is a fixed real number.  Note that $\ast(\delta)$ is a linear combination of constructs, living in the vector space spanned by the constructs of $\hyper{H}$. We shall  write $\ast^{\tau}(\delta)$ instead of $\ast(\delta)$ whenever we wish to record explicitly the underlying support team 
$\tau$.

\smallskip

Forgetting about the coefficients (or setting $q=1$), this definition has the following algorithmic (and ludic) reading: a summand of $\ast(\delta)$ is obtained by  choosing a subset $B$ of the set $A$ of ``trees of sets of cards", and by picking simultaneously the set of cards  at the root (or ``top'') of each chosen tree, and then continuing this game recursively.  Note that if at each stage one chooses $B$ to be a singleton, and if each $C_a$ is a construction, then we have really the ``feeling'' of shuffling trees of cards in a sense closer to the one familiar to card game players.

\smallskip
In the following two examples, we illustrate  the shuffle product on the clans of Example \ref{treesassoc} and Example \ref{excyc}, respectively. 
\begin{example} Let us take ${\bf H}={\bf K}^{\set{1<2<1'<2'<3}}$ 
 and let $T$ (in term form) be the tree $a_1(1,2,a_2(1',2'),3)$, and define $\tau=(\set{{\bf H}_{a_1}, {\bf H}_{a_2}},{\bf H})$, where
 ${\bf H}_{a_1}={\bf K}^{\set{1,2,3}}$ and ${\bf H}_{a_2}={\bf K}^{\set{1',2'}}$. 
 Consider now the delegation$(\set{C_{a_1},C_{a_2}},\hyper{H})$, where 
$C_{a_1}=\set{1}(\set{2,3})$ and $C_{a_2}= \set{1'}(\set{2'})$. We compute, say
$$\begin{array}{lll}
\ast_{\set{a_2}}(C_{a_1},C_{a_2}) & = &\set{1'}(\restrconstr{(C_{a_1})}{\set{1,2}},\ast(\restrconstr{(C_{a_2})}{\set{2'}},\restrconstr{(C_{a_1})}{\set{3}})\\
& = & \set{1'}(\set{1}(\set{2}),\ast(\set{2'},\set{3}))\\
& = & \set{1'}(\set{1}(\set{2}), \set{2'}(\set{3})+\set{3}(\set{2'})-\{2',3\}).
\end{array}$$

\end{example}
\begin{example} Our ambient anti-strict team will be
$$\tau=(({\bf O}_{\{1,\dots,m\}},{\bf O}_{\{m+1,\dots,n\}}),{\bf O}_{\{1,\dots,m,m+1,\dots,n\}}).$$ As a delegation, we take $C_1=\{i\}(C_{1,1}):{\bf O}_{\{1,\dots,m\}}$ and $C_2=\{j\}(C_{2,1}):{\bf O}_{\{m+1,\dots,n\}}$, where $i\notin\{1,m\}$ and $j\not\in\{m+1,n\}$. We then have
$$\begin{array}{rcl}
{\bf O}_{\{1,\dots,n\}}\backslash\{i,j\} &\leadsto &   {\bf K}_{\{i+1,\dots,j-1\}},  {\bf K}_{\{j+1,\dots,n,1,\dots,i-1\}},
\end{array}$$

$$\begin{array}{rcl}
\tau_1&=&(({\bf K}_{\{i+1,\dots,m\}},{\bf K}_{\{m+1,\dots,j-1\}}),{\bf K}_{\{i+1,\dots,j-1\}}),  \\[0.1cm]
\tau_2&=&(({\bf K}_{\{j+1,\dots,n\}},  {\bf K}_{\{1,\dots,i-1\}}),{\bf K}_{\{j+1,\dots,n,1,\dots,i-1\}}),\end{array}$$ 

and

$$\begin{array}{rcl}
\delta_1&=((({C_1})_{\lceil_{\{i+1,\dots,m\}}},({C_2})_{\lceil_{\{m+1,\dots,j-1\}}}),{\bf K}_{\{i+1,\dots,j-1\}}),&\\[0.1cm]
\delta_2&=((({C_2})_{\lceil_{\{j+1,\dots,n\}}}, ({C_{1}})_{\lceil_{\{1,\dots,i-1\}}}),{\bf K}_{\{j+1,\dots,n,1,\dots,i-1\}}),&\end{array}$$ 
and we calculate
$$\ast_{\{1,2\}}(C_1,C_2)=\{i,j\}(\ast(\delta_1),\ast(\delta_2)).$$
 
\end{example}

\subsection{Polydendriform structure} \label{associativity-section} \label{restriction-characterisation-subsection}
We first prove a  useful property of commutation of the shuffle product  \eqref{shuffle-antistrict-def} with restriction. With this goal, we extend the  notion of restriction from constructs to linear constructs in the obvious way. In addition, from now on, we suppose $q=-1$.
 
\begin{proposition} \label{shuffle-restriction}
Let $\delta = (\{C_a:{\bf H}_a\,|\, a\in A\},{\bf H})$  be a delegation and let $\emptyset\neq K\subseteq H$ be a connected subset of $H$. For a subset $B\inc A$, set  $B_1:=\setc{b\in B}{X_b\inter K\neq\emptyset}$ and
$B_2:=\setc{b\in B}{X_b\inter K=\emptyset}$. Then the following properties hold.
\begin{itemize}
\item[{\em (P$_1$)}] {\it If $B_1\neq\emptyset$, then $\restrconstr{(\ast_B(\delta))}{K}=\restrconstr{(\ast_{B_1}(\delta))}{K}$.}
\item[{\em (P$_2$)}] {\it If $B_1=\emptyset$, then $\restrconstr{(\ast_B(\delta))}{K}=\ast(\restrconstr{\delta}{K})$.}
\item[{\em (P)}] The shuffle product is compatible with restriction, i.e., $\restrconstr{(\ast(\delta))}{K}=\ast(\restrconstr{\delta}{K})$.
\end{itemize}
\end{proposition}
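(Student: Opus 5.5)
We argue by induction on $|H|$ (or equivalently on the total size of the delegation). All three properties are proved simultaneously: (P) at size $|H|$ uses (P$_1$), (P$_2$) at size $|H|$, while (P$_1$), (P$_2$) at size $|H|$ use (P) for the strictly smaller hypergraphs $\hyper{H}_i$ appearing in the decomposition $\hyper{H},X_B\leadsto\hyper{H}_1,\dots,\hyper{H}_n$. Throughout, anti-strictness is what makes the restricted delegations well-defined: for connected $K$ and each $a$ with $H_a\cap K\neq\emptyset$, the set $H_a\cap K$ is connected in $\hyper{H}_a$, so $\restrconstr{(C_a)}{H_a\cap K}$ exists. We also use Remark \ref{rest1} to collapse iterated restrictions, together with the compatibility $(\delta_{\lceil K})_{\lceil L}=\delta_{\lceil L}$ for $L\inc K$, which follows from it componentwise.

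For (P$_2$), suppose $X_B\cap K=\emptyset$. By Definition \ref{restriction-definition}, $K$ lies connected inside a unique component $\hyper{H}_i$ of $\restrH{H}{X_B}$, and $\restrconstr{(\ast_B(\delta))}{K}=\restrconstr{(\ast(\delta_{\lceil H_i}))}{K}$. By (P) applied to the smaller delegation $\delta_{\lceil H_i}$ (induction), this equals $\ast((\delta_{\lceil H_i})_{\lceil K})=\ast(\delta_{\lceil K})$.

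For (P$_1$), both sides have root $X_B\cap K=X_{B_1}\cap K$. For the subtrees, let $\hyper{K},(X_B\cap K)\leadsto K_1,\dots,K_p$, where each $K_j$ sits inside some component $\hyper{H}_{\psi(j)}$ of $\restrH{H}{X_B}$ and inside some component $\hyper{H}'_{\psi'(j)}$ of $\restrH{H}{X_{B_1}}$. Unfolding the definition, the $j$-th subtree on the left is $\restrconstr{(\ast(\delta_{\lceil H_{\psi(j)}}))}{K_j}$ and on the right is $\restrconstr{(\ast(\delta_{\lceil H'_{\psi'(j)}}))}{K_j}$. By induction via (P) together with Remark \ref{rest1}, both reduce to $\ast(\delta_{\lceil K_j})$. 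One must check that the ambient sets are genuinely smaller than $H$ so the induction applies, and that the two delegations restrict to the same thing. The latter uses the point that $\restrconstr{(C_a)}{H_a\cap H_i}$ for $a\in B$ has root $X_a$ removed, and that its further restriction to $K_j$ agrees with $\restrconstr{(C_a)}{H_a\cap K_j}$ whenever $X_a\cap K_j=\emptyset$. This identification of the delegations is the main technical obstacle, and it is where anti-strictness (through Lemma \ref{daggerbis}) is used to control how $H_a\cap K_j$ sits relative to the decomposition of $\hyper{H}_a$ by $X_a$.

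Finally, for (P), expand $\restrconstr{(\ast(\delta))}{K}=\sum_{B}(-1)^{|B|-1}\restrconstr{(\ast_B(\delta))}{K}$. Let $A_1=\{a: X_a\cap K\neq\emptyset\}$ and $A_2$ its complement. Group the subsets $B$ by $B_1=B\cap A_1$. When $B_1\neq\emptyset$, (P$_1$) makes the summand depend only on $B_1$, so the total contribution is $\restrconstr{(\ast_{B_1}(\delta))}{K}\sum_{B_2\inc A_2}(-1)^{|B_1|+|B_2|-1}$. This vanishes unless $A_2=\emptyset$, and here $q=-1$ is essential. When $B_1=\emptyset$, (P$_2$) gives $\ast(\delta_{\lceil K})\sum_{\emptyset\neq B_2\inc A_2}(-1)^{|B_2|-1}$, which equals $\ast(\delta_{\lceil K})$ if $A_2\neq\emptyset$.

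Hence, if $A_2\neq\emptyset$ we are done. If $A_2=\emptyset$, the restricted delegation $\delta_{\lceil K}$ has participants indexed by $A'=\{a: H_a\cap K\neq\emptyset\}=A$, with roots $X_a\cap K$. We then match $\sum_{B}(-1)^{|B|-1}\restrconstr{(\ast_B(\delta))}{K}$ term by term with $\ast(\delta_{\lceil K})=\sum_B(-1)^{|B|-1}\ast_B(\delta_{\lceil K})$. Both sides of each pair have root $X_B\cap K$, and the subtrees agree by exactly the computation already done for (P$_1$).
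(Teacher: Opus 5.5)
Your proposal is correct and follows the paper's proof: a simultaneous induction on $|H|$, with (P$_1$) and (P$_2$) obtained by unfolding the definitions, applying (P) to the smaller delegations $\delta_{\lceil H_i}$ and collapsing iterated restrictions via Remark~\ref{rest1}, and (P) obtained by grouping the sets $B$ according to $B_1$ and cancelling with $q=-1$. Your separate treatment of $A_2=\emptyset$ is actually needed, since the paper's identities $(1+q)^{|A_2|}=0$ and $\frac{(1+q)^{|A_2|}-1}{q}=1$ fail in that case; by contrast, the ``main technical obstacle'' you flag in (P$_1$) is just the componentwise Remark~\ref{rest1} you already stated (anti-strictness only guarantees the restrictions are defined), so Lemma~\ref{daggerbis} is not needed.
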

\begin{proof} We shall prove   the three  properties   together, by induction on the size of ${\bf H}$.  

\smallskip

 We   prove  (P$_1$) first. We set $X_B=\Union_{b\in B}X_b$ and $X_{B_1}=\Union_{b\in B_1}X_b$, inducing
$$\begin{array}{rcl}

\hyper{H},X_B & \leadsto & \hyper{H}_1,\ldots,\hyper{H}_n\\ 
\hyper{H},X_{B_1} & \leadsto & \hyper{H}^1_1,\ldots,\hyper{H}^1_m \\
\hyper{H}_K,X_{B}\inter K & \leadsto &\hyper{H}_1^K,\ldots,\hyper{H}_p^K,
\end{array}$$
as well as (a unique) $\varphi:\set{1,\ldots,p}\rightarrow\set{1,\ldots,n}$, such that $H_k^K\inc H_{\varphi(k)}$ for all $1\leq k\leq p$. We observe that, by construction, $X_B\cap K=X_{B_1}\cap K$. Therefore, the third decomposition also induces (a unique) $\psi:\set{1,\ldots,p}\rightarrow\set{1,\ldots,m}$, such that $H_k^K\inc H^1_{\psi(k)}$ for all $1\leq k\leq p$. The left-hand side of ($P_1$) unfolds as follows:  
 $$\begin{array}{rcl}
\restrconstr{(\ast_B(\delta))}{K} & =&(X_ B(\ast(\delta\lceil_{H_1}),\dots,\ast(\delta\lceil_{H_n})))\lceil_{K} \\[0.1cm]
&=&(X_B\cap K)(\ast(\delta\lceil_{H_{\varphi(1)}})\lceil_{H^K_1}, \dots ,\ast(\delta\lceil_{H_{\varphi(p)}})\lceil_{H^K_p}) \\[0.1cm]
&=& (X_B\cap K) (\ast((\delta\lceil_{H_{\varphi(1)}})\lceil_{H^K_1}), \dots ,\ast((\delta\lceil_{H_{\varphi(p)}})\lceil_{H^K_p})) \\[0.1cm]
&=& (X_B\cap K)(\ast(\delta\lceil_{{H^K_1}}), \dots ,\ast(\delta\lceil_{{H^K_p}})), 
\end{array}$$
where we have successively used the definition of shuffle product, the definition of restriction, the induction hypothesis, and Remark \ref{rest1}. On the other hand, the right-hand side is computed in an analogous way, replacing $B$ by $B_1$, and $\varphi$ by $\psi$. Since $X_B\cap K=X_{B_1}\cap K$, we thus obtain $$\restrconstr{(\ast_B(\delta))}{K} =(X_{B_1}\cap K)(\ast(\delta\lceil_{{H^K_1}}), \dots ,\ast(\delta\lceil_{{H^K_p}}))=\restrconstr{(\ast_{B_1}(\delta))}{K}.$$
 
\smallskip
 
For the proof of  the property (P$_2$) , we consider only the decomposition $\hyper{H},X_B   \leadsto   \hyper{H}_1,\ldots,\hyper{H}_n$. We notice that the assumption $B_1=\emptyset$ amounts to $X_b\cap K=\emptyset$, which entails that there exists a unique $1\leq i\leq n$ such that $K\subseteq H_i$. The left-hand side now calculates as 
 $$\begin{array}{rcl}
\restrconstr{(\ast_B(\delta))}{K} & =&(X_ B(\ast(\delta\lceil_{H_1}),\dots,\ast(\delta\lceil_{H_n})))\lceil_{K} \\[0.1cm]
&=& (\ast (\delta\lceil_{H_i}))\lceil_K \\[0.1cm]
&=& \ast ((\delta\lceil_{H_i}) \lceil_K) \\[0.1cm]
&=& \ast (\delta\lceil_{K}),
\end{array}$$
where we followed exactly the same steps as above.  

\smallskip
 
Finally,
using the properties  (P$_1$) and  (P$_2$), we calculate
$$\begin{array}{l} \restrconstr{(\ast(\delta))}{K} \\
 =  \Sigma_{\emptyset\neq B\inc A} q^{|B|-1} \restrconstr{(\ast_B(\delta))}{K}\\
 =  \Sigma_{\emptyset\neq B_1\inc A_1,B_2\inc A_2} q^{|B_1|+|B_2|-1} \restrconstr{(\ast_B(\delta))}{K} +
\Sigma_{\emptyset\neq B_2\inc A_2} q^{|B_2|-1} \restrconstr{(\ast_B(\delta))}{K}\\
 =  \Sigma_{\emptyset\neq B_1\inc A_1} q^{|B_1|-1}(\Sigma_{B_2\inc A_2} q^{|B_2|})\restrconstr{(\ast_{B_1}(\delta))}{K} +
(\Sigma_{\emptyset\neq B_2\inc A_2} q^{|B_2|-1}) \ast(\restrconstr{\delta}{K}).
\end{array}$$
Now we observe (using $q=-1$) that 
$$\begin{array}{l}
\Sigma_{B_2\inc A_2} q^{|B_2|} = (1+q)^{|A_2|}=0\\
\Sigma_{\emptyset\neq B_2\inc A_2} q^{|B_2|-1} = \frac{(1+q)^{|A_2|}-1}{q}= 1,
\end{array}$$
and we conclude that $ \restrconstr{(\ast(\delta))}{K} = \Sigma_{\emptyset\neq B_1\inc A_1}0 \:\restrconstr{(\ast_{B_1}(\delta))}{K} +
\ast(\restrconstr{\delta}{K}) = \ast(\restrconstr{\delta}{K})$.
\end{proof}

We next embark on the proof that, for $q=-1$, we get  a polydendriform structure in the anti-strict  setting. 
We need one more definition. A  clan  $\Xi$ is called  \emph{associative} if,
for all $\tau  =  (\setc{\hyper{H}_a}{a\in A},\hyper{H})\in \Xi$, $a_0\in A$ and
$\tau'= (\setc{\hyper{H}_{(a_0,a')}}{a'\in A'},\hyper{H}_{a_0}) \in\Xi$, we have that
$$\tau'':=(\{{\bf H}_{a}\,|\,a\in A\backslash \{a_0\}\}\cup \{\hyper{H}_{(a_0,a')}\,|\,a'\in A'\},{\bf H})\in\Xi.$$  We  refer to $\tau''$ as the grafting of  
$\tau'$ to  $\tau$ along $a_0$. We can now state the main theorem of the article, illustrated in Figure \ref{figExple}.

\begin{theorem}\label{assoc-antistrict}
Let $\Xi$ be an associative anti-strict clan, and suppose that $\tau = (\setc{\hyper{H}_a}{a\in A},\hyper{H})\in \Xi$, $a_0\in A$, and $\tau'= (\setc{\hyper{H}_{(a_0,a')}}{a'\in A'},\hyper{H}_{a_0})\in\Xi$.
Suppose that we are given constructs $C_a:\hyper{H}_a$, for all $a\in A\backslash \{a_0\}$, 
and constructs $C_{(a_0,a')}:\hyper{H}_{(a_0,a')}$, for all $a'\in A'$. Taking $\tau''$ to be the grafting of $\tau'$ to  $\tau$ along $a_0$ and  setting $A'':=(A\backslash\{a_0\})\cup\{(a_0,a')\,|\,a'\in A'\}$, denote the corresponding delegations by $\delta''=(\tau'',\{C_{a''}\,|\,a''\in  A'' \})$ and
 $\delta'=(\tau',\{C_{(a_0,a')}\,|\,a'\in A' \})$.
Then, assuming that $q=-1$ in \eqref{shuffle-antistrict-def}, for each $\emptyset\neq B''\subseteq A''$, the following \emph{polydendriform equation} holds, 
$$
\ast^{\tau''}_{B''}(\delta'')=\begin{cases}
\ast^{\tau}_{B''}(\{C_a\,|\,a\in A\backslash\{a_0\}\}\cup\{\ast^{\tau'}(\delta')\}), & \mbox{ if } B''\subseteq A\backslash\{a_0\}\\
\ast^{\tau}_{B}(\{C_a\,|\,a\in A\backslash\{a_0\}\}\cup \{\ast^{\tau'}_{B'}(\delta')\}), & \mbox{ if } B''\not\inc A\backslash\{a_0\}
\end{cases}
,$$   where, in the second case, $B=(B''\cap(A\backslash\{a_0\}))\cup\{a_0\}$ and $B'=\{a'\in A'\,|\, (a_0,a')\in B''\}$. Moreover, the polydendriform equation  implies the following associativity equation: $$\ast^{\tau''}(\delta'')=\ast^{\tau}(\{C_a\,|\,a\in A\backslash\{a_0\}\}\cup\{\ast^{\tau'}(\delta')\}).$$ 
\end{theorem}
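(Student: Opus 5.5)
Both sides of the polydendriform equation are of the form $X(\ldots)$, so I will compare roots and then the children. The argument is by induction on $|H|$, and it relies on Proposition \ref{shuffle-restriction} (property (P)) to make restriction commute with the inner product. The left-hand side is computed in $\tau''$, where each participant has its own root $X_{a''}=\mathrm{root}(C_{a''})$. On the right-hand side, the participant $a_0$ carries the linear combination $\ast^{\tau'}(\delta')$, or in the second case the single construct $\ast^{\tau'}_{B'}(\delta')$, whose root is $X'_{B'}:=\bigcup_{a'\in B'}X_{(a_0,a')}$. Extending $\ast^\tau_B$ linearly in the $a_0$ slot handles the first case.

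\textbf{Second case} ($B''\not\subseteq A\backslash\{a_0\}$). Here the root on the left is $X_{B''}=\bigcup_{b\in B\backslash\{a_0\}}X_b\cup X'_{B'}$, which equals the root $X_B$ computed on the right with $X_{a_0}:=X'_{B'}$. The two sides therefore have the same root, and both decompose $\hyper{H},X_{B''}\leadsto \hyper{H}_1,\ldots,\hyper{H}_n$ into the same components. It remains to match children:
\[
\ast^{\tau''_{H_i}}(\delta''\lceil_{H_i})=\ast^{\tau_{H_i}}\bigl(\{C_a\lceil_{H_i}\}\cup\{(\ast^{\tau'}_{B'}(\delta'))\lceil_{H_a\cap H_i}\}\bigr).
\]
I will treat $H_{a_0}\cap H_i$ first. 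Anti-strictness (AS) shows this set is connected in $\hyper{H}_{a_0}$ when it is nonempty. It is disjoint from $X'_{B'}$, so the definition of restriction puts it inside one component of $\hyper{H}_{a_0}\backslash X'_{B'}$. By Remark \ref{rest1} and the definition of $\ast_{B'}$, we then get $(\ast^{\tau'}_{B'}(\delta'))\lceil_{H_{a_0}\cap H_i}=\ast^{\tau'}(\delta'\lceil_{H_{a_0}\cap H_i})$, and the last step uses (P) for $\tau'$. After this substitution, the right-hand child is $\ast^{\tau_{H_i}}$ applied to a delegation whose $a_0$-entry is a full product over the restricted team $\tau'_{H_{a_0}\cap H_i}$. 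Closure of the clan under decomposition and grafting identifies $\tau''_{H_i}$ with the grafting of $\tau'_{H_{a_0}\cap H_i}$ onto $\tau_{H_i}$. The induction hypothesis, in its associativity form, then gives equality of the children. If $H_{a_0}\cap H_i=\emptyset$, the $a_0$ slot simply disappears from both sides.

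\textbf{First case} ($B''\subseteq A\backslash\{a_0\}$). The root on both sides is $X_{B''}$, because $a_0\notin B''$ and $\ast^{\tau}_{B''}$ never reads the root of the $a_0$ entry. The linear combination $\ast^{\tau'}(\delta')$ enters only through its restrictions to the sets $H_{a_0}\cap H_i$, which are connected by (AS). By (P), these restrictions equal $\ast^{\tau'}(\delta'\lceil_{H_{a_0}\cap H_i})$. The children are then matched by the same induction as in the second case.

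\textbf{Associativity.} I sum the polydendriform equation over all $B''$ with weight $q^{|B''|-1}$. The subsets $B''\not\subseteq A\backslash\{a_0\}$ correspond bijectively to pairs $(B,B')$ with $a_0\in B$ and $\emptyset\neq B'$, and the weights satisfy $q^{|B''|-1}=q^{|B|-1}q^{|B'|-1}$. Summing over $B'$ reconstructs $\ast^{\tau'}(\delta')$ in the $a_0$ slot, and the subsets $B''\subseteq A\backslash\{a_0\}$ supply the terms with $a_0\notin B$. Together these give the right-hand side. Because this associativity statement is also what the induction uses at smaller size, I will prove both statements jointly by induction on $|H|$.

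\textbf{Main obstacle.} The hardest step will be the bookkeeping that identifies restricted teams and delegations. Concretely, I must show that $\tau''_{H_i}$ is the grafting of $\tau'_{H_{a_0}\cap H_i}$ onto $\tau_{H_i}$, and that the restrictions of $\delta''$ agree componentwise. This is exactly where anti-strictness (connectedness of $H_{a_0}\cap H_i$) and $q=-1$ (entering through (P)) are indispensable.
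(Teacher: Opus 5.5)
Your outline follows the paper's proof closely: the same joint induction on $|H|$, the same case split, (P) in the first case and the (P$_2$)-type computation in the second. It also inherits the paper's weak step. You extend $\ast^\tau_{B''}$ linearly in the $a_0$ slot. You then claim that $\ast^{\tau'}(\delta')$ enters "only through its restrictions to the sets $H_{a_0}\cap H_i$", and substitute (P) child by child. The paper does the same, writing $\ast^{\tau}_{B''}(\delta)=X_{B''}(\ast(\delta_{\lceil_{H_1}}),\dots,\ast(\delta_{\lceil_{H_n}}))$ with $\ast^{\tau'}(\delta')$ in slot $a_0$.

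Write $\ast^{\tau'}(\delta')=\sum_j\lambda_jD_j$. Linearity gives $\sum_j\lambda_j X_{B''}(\dots)$, where every child is computed from the same $D_j$. Since $X_{B''}(-,\dots,-)$ is multilinear in its children, what actually enters is the joint restriction $\sum_j\lambda_j\,D_j\lceil_{K_1}\otimes\cdots\otimes D_j\lceil_{K_n}$, with $K_i=H_{a_0}\cap H_i$. Property (P) only identifies its marginals $\sum_j\lambda_jD_j\lceil_{K_i}$.

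This is harmless when $H_{a_0}$ meets a single $H_i$, which is automatic under strictness. Under (AS), however, $H_{a_0}$ need not be connected in $\hyper{H}$, and the joint restriction need not factor. The same issue affects your second case whenever two of the $H_i$ meet the same component of $\hyper{H}_{a_0}\backslash X'_{B'}$. Note that $\ast^{\tau'}_{B'}(\delta')=X'_{B'}(E_1,\dots,E_m)$ has linear combinations as children, so it is not a single construct.

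The gap cannot be closed by bookkeeping. Work in the clan of Example \ref{treesassoc} with the following data:
\begin{itemize}
\item $\tau$ given by the tree $a_0(1,2,a_1(5),3,4)$ and $\tau'$ given by $b_1(1,b_2(2,3),4)$;
\item $C_{b_1}=4(1)$, $C_{b_2}=2(3)$, $C_{a_1}=5$, and $B''=\{a_1\}$.
\end{itemize}
Removing $5$ from $\hyper{H}$ leaves the components $\{1,2\}$ and $\{3,4\}$, and
\[
\ast^{\tau''}_{\{a_1\}}(\delta'')=5\bigl(1(2)+2(1)-\{1,2\},\;4(3)+3(4)-\{3,4\}\bigr),
\]
while
\[
\ast^{\tau'}(\delta')=4(1(2(3)))+4(2(1,3))-4(\{1,2\}(3))+2(1,4(3))+2(1,3(4))-2(1,\{3,4\})-\{2,4\}(1,3).
\]
The restrictions of this sum to $\{1,2\}$ and to $\{3,4\}$ are exactly the two children above, as (P) predicts. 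However, the only summand restricting to $1(2)$ on $\{1,2\}$ is $4(1(2(3)))$, and it restricts to $4(3)$ on $\{3,4\}$. The underlying reason is that the order "$4$ before $1$" from $C_{b_1}$ is preserved by $\ast^{\tau'}$ but forgotten once $5$ splits the vertex set.

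Consequently, the linear extension of $\ast^\tau_{\{a_1\}}$ evaluated at $\ast^{\tau'}(\delta')$ has coefficient $0$ on $5(1(2),3(4))$, while the left-hand side has coefficient $1$. On each side of the associativity equation, terms with root $\{5\}$ arise only from $B''=\{a_1\}$, respectively $B=\{a_1\}$. So associativity fails too under the linear reading, which your summation over $B'$ and the operadic formulation both require.

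A correct argument therefore needs an extra hypothesis. It must ensure that $H_{a_0}$, and each component of $\hyper{H}_{a_0}\backslash X'_{B'}$, meets at most one component of $\hyper{H}\backslash X_{B''}$, as happens in the strict setting.
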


\begin{figure}
\centering
\resizebox{\textwidth}{!}{
\includegraphics[height=4cm]{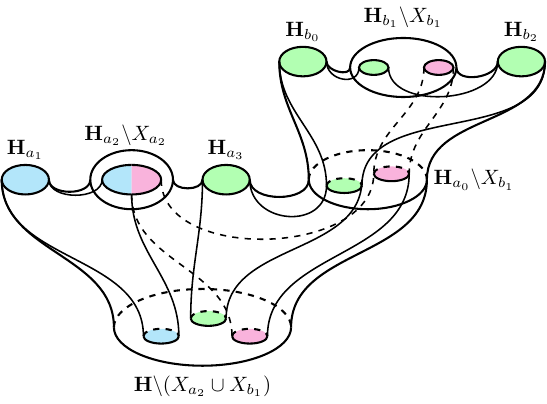}
\hspace{1cm}
\includegraphics[height=4cm]{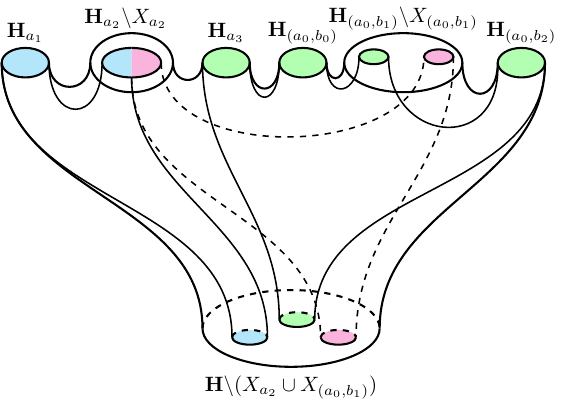} }
\caption{Illustration of 
Theorem \ref{assoc-antistrict}} \label{figExple}
\end{figure}

\begin{proof} We prove the polydendriform equation  and the associativity equation  together by induction on the size of ${\bf H}$.
Set $X_{B''}:=\Union_{b''\in B''}X_{b''}$, where $X_{b''}=\text{root}(C_{b''})$, and consider the decomposition 
$$\hyper{H},X_{B''}\leadsto H_1,\ldots,H_n.$$
By definition, we have that 
$$\ast^{\tau''}_{B''}(\delta'')=X_{B''}(\ast({\delta''}_{\lceil_{H_1}}),\dots,\ast({\delta''}_{\lceil_{H_n}})).$$
We distinguish two cases, corresponding to the two cases of the equation.
\begin{enumerate}
\item $B''\inc A\setminus\set{a_0}$. Write $\delta$ for the delegation $\{C_a\,|\,a\in A\backslash\{a_0\}\}\cup\{\ast^{\tau'}(\delta')\}$ of support $\tau$. In order to
 prove that $\ast^{\tau''}_{B''}(\delta'')=\ast^{\tau}_{B''}(\delta)$, since 
$$\ast^{\tau}_{B''}(\delta)=X_{B''}(\ast(\delta_{\lceil_{H_1}}),\dots,\ast(\delta_{\lceil_{H_n}})),$$
we have to show that for each $1\leq i\leq n$, $\ast({\delta''}_{\lceil_{H_i}})=\ast(\delta_{\lceil_{H_i}})$. For this, we first note that $\delta$ is obtained from $\delta''$ by replacing the set of  constructs $\{C_{(a_0,a')}:{\bf H}_{(a_0,a')}\,|\, a'\in A'\}$ of $\delta'$ with the single linear construct $\ast^{\tau'}(\delta')$ of ${\bf H}_{a_0}$. For $1\leq i\leq n$, there are two possible cases: either $H_i\cap H_{a_0}=\emptyset$, in which case we have, by definition, ${\delta''}_{\lceil_{H_i}}=\delta_{\lceil_{H_i}}$, and the conclusion follows obviously, or  $H_i\cap H_{a_0}\neq\emptyset$.  In the latter case, we unfold the definitions of both ${\delta''}_{\lceil_{H_i}}$ and $\delta_{\lceil_{H_i}}$ : $${\delta''}_{\lceil_{H_i}}=\{{(C_a)}_{\lceil_{H_i}}\,|\,a\in A\backslash\{a_0\},\, H_a\cap H_i\neq \emptyset\}\cup {\delta'}_{\lceil_{H_i}}$$
and $$\delta_{\lceil_{H_i}}=\{{(C_a)}_{\lceil_{H_i}}\,|\,a\in A\backslash\{a_0\},\, H_a\cap H_i\neq \emptyset\}\cup \{(\ast^{\tau'}(\delta'))_{\lceil_{H_i}}\}.$$
By Proposition \ref{shuffle-restriction}, we have that $(\ast^{\tau'}(\delta'))_{\lceil_{H_i}}=\ast^{\tau'}({\delta'}_{\lceil_{H_i}})$, which allows us to conclude by induction.

\item $B''\not\inc A\setminus\set{a_0}$. We now set 
$\delta=\{C_a\,|\,a\in A\backslash\{a_0\}\}\cup\{\ast^{\tau'}_{B'}(\delta')\}$. Our goal is to prove $\ast_{B''}(\delta'')=\ast_{B}(\delta)$. Let $X_{B'}:=\Union_{b'\in B'}X_{(a_0,b')}$.  Noticing that  
$$\Union_{b\in B}X_b=X_{B'}\union(\Union_{b\in B\setminus\set{a_0}}X_b)=X_{B''},$$ we have that
$$\ast^\tau_B(\delta)=X_{B''}(\ast(\delta_{\lceil_{H_1}}),\dots,\ast(\delta_{\lceil_{H_n}})).$$
Therefore, our goal reduces to proving that $\ast({\delta''}_{\lceil_{H_i}})=\ast(\delta_{\lceil_{H_i}})$, for each $1\leq i\leq n$. Once again, we note that $\delta$ is obtained from $\delta''$ by replacing the set of  constructs $\{C_{(a_0,a')}:{\bf H}_{(a_0,a')}\,|\, a'\in A'\}$ of $\delta'$ with the single linear construct $\ast_{B'}^{\tau'}(\delta')$ of ${\bf H}_{a_0}$.  As in the first part of the proof, we distinguish  two possible cases for $1\leq i\leq n$: either $H_i\cap H_{a_0}=\emptyset$, in which case we conclude as before,  or  $H_i\cap H_{a_0}\neq\emptyset$.  In the latter case, ${\delta''}_{\lceil_{H_i}}$ unfolds exactly as before, while for $\delta_{\lceil_{H_i}}$ we have 
$$\delta_{\lceil_{H_i}}=\{{(C_a)}_{\lceil_{H_i}}\,|\,a\in A\backslash\{a_0\},\, H_a\cap H_i\neq \emptyset\}\cup \{(\ast_{B'}^{\tau'}(\delta'))_{\lceil_{H_i}}\}.$$

 Let us define $B'_1=\{b'\in B'\,|\, X_{b'}\cap H_i\neq\emptyset\}$.  Then, since $H_i\cap X_{B''}=\emptyset$, and $X_{b'}\subseteq X_{B''}$ for each $b'\in B'$, we have that $B'_1=\emptyset$. Therefore, by the property (P$_2$)  of Proposition \ref{shuffle-restriction},  we have
$$(\ast_{B'}^{\tau'}(\delta'))_{\lceil_{H_i}}=\ast({\delta'}_{\lceil_{H_i}}),$$
which again allows us to conclude by induction. 
 \end{enumerate}
Once the polydendriform equation is proven, the associativity follows easily, exactly as in the proof of \cite{PLBJ1}[Theorem 4.17].
\end{proof}

We refer to  \cite[Proposition 4.18]{PLBJ1} for the  exact relation (in an ordered setting) between the present polydendriform structure and the notion of  tridendiform structure, originally introduced by Loday and Ronco under the name of dendriform trialgebra structure \cite{LR-tri}. Informally, ``polydendriform" is the varying-arity generalisation of ``tridendriform":  if $A=\set{l,r}$ has cardinal two, then there are three choices of $\emptyset\neq B\subseteq A$,
and the operations $\ast_{\set{l}}, \ast_{\set{l,r}}, \ast_{\set{r}}$  read as $\prec, \bcdot,\succ$ of the tridendriform literature.
 
 \subsection{Compatibility with the strict setting}\label{strict-and-antistrict-coincidence-section}
 Our next goal is to show that, for clans that are both strict and anti-strict, the shuffle product defined in Section \ref{shuffle-definition-section} coincides with the one defined in \cite{PLBJ1}.

\smallskip
 
The following technical  lemma will serve us to connect the two settings.
\begin{lemma}\label{oldway}
Let $\tau=(\{{\bf H}_a\,|\,a\in A\},{\bf H})$ be an anti-strict team. Define,  for each decomposition \eqref{decomp}, 
 $\tilde{A}:=(A\backslash B)\union\setc{(b,i)}{b\in B, 1\leq i\leq n_b}$. Then,
 for each $\tilde{a}\in \tilde{A}$  and  each $i\in\set{1,\ldots,n_B}$ such  $H_{\tilde{a}}\inter H_i\neq\emptyset$, the hypergraph $(\hyper{H}_{\tilde{a}})_{H_i}$ is connected.
\end{lemma}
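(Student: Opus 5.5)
We apply (AS) directly to each $H_i$, and then look at what the participating hypergraph indexed by $\tilde a$ is in the two possible cases.

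Fix $i\in\{1,\ldots,n_B\}$. By definition of the decomposition $\hyper{H},X_B\leadsto \hyper{H}_1^B,\ldots,\hyper{H}^B_{n_B}$, the set $H_i$ is a connected component of $\restrH{H}{X_B}$. It is therefore a non-empty connected subset of $H$, because $\hyper{H}_{H_i}$ is exactly the component $\hyper{H}^B_i$. Property (AS) of Definition \ref{anti-strict-definition} then says that for every $a\in A$ with $H_a\cap H_i\neq\emptyset$, the set $H_a\cap H_i$ is connected in $\hyper{H}_a$.

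Case 1: $\tilde a=a\in A\backslash B$. Then $\hyper{H}_{\tilde a}=\hyper{H}_a$, and $(\hyper{H}_a)_{H_i}=(\hyper{H}_a)_{H_a\cap H_i}$ is connected by the observation above. Nothing more is needed.

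Case 2: $\tilde a=(b,j)$ with $b\in B$. Here $H_{(b,j)}$ is a connected component of $\restrH{H_b}{X_b}$, so $H_{(b,j)}\subseteq H_b\backslash X_b$. Since $H_i\cap X_B=\emptyset$, we also have $H_b\cap H_i\subseteq H_b\backslash X_b$. Set $L:=H_b\cap H_i$. By (AS), $L$ is connected in $\hyper{H}_b$, and it avoids $X_b$, so it is a connected subset of the hypergraph $\restrH{H_b}{X_b}$, whose hyperedges are those of $\hyper{H}_b$ contained in $H_b\backslash X_b$. A connected subset of a hypergraph lies entirely inside a single connected component. Otherwise the partition of $L$ induced by the components would split $\hyper{H}_L$, because every hyperedge of $\restrH{H_b}{X_b}$ lies in one component. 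Hence $L\subseteq H_{(b,k)}$ for a unique $k$. The assumption $H_{(b,j)}\cap H_i\neq\emptyset$ forces $k=j$, so $H_{(b,j)}\cap H_i=L$. Finally, $(\hyper{H}_{(b,j)})_{L}=(\hyper{H}_b)_L$: both consist of the hyperedges of $\hyper{H}_b$ contained in $L$, and $L\subseteq H_{(b,j)}$. This hypergraph is connected, which concludes.

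The only delicate point is this last identification in Case 2. One must check that restricting the component $\hyper{H}_{(b,j)}$ to $H_i$ gives the same hypergraph as restricting $\hyper{H}_b$ to $H_b\cap H_i$. This uses that $H_b\cap H_i$ avoids $X_b$ and that a connected set cannot straddle two components; both facts are elementary from the definitions in Section \ref{hypergraphs-subsection}.
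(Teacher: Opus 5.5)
Your proposal is correct and follows essentially the same argument as the paper: apply (AS) to $K:=H_i$. In the case $\tilde a=(b,j)$, you use that $H_b\cap H_i$ avoids $X_b$, hence lies in a unique component $\hyper{H}_{(b,k)}$ with $k=j$, and then identify $H_{(b,j)}\cap H_i=H_b\cap H_i$. You additionally spell out two elementary facts the paper leaves implicit: that a connected set cannot straddle two components, and that $(\hyper{H}_{(b,j)})_L=(\hyper{H}_b)_L$.
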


\begin{proof}
Let $\tilde{a}\in \tilde{A}$ and $i\in\set{1,\ldots,n}$ be such that $H_{\tilde{a}}\inter H_i\neq\emptyset$.

If $\tilde{a}\in A\backslash B$, then, by instantiating Definition \ref{anti-strict-definition} for $K:=H_i$, we have that $H_{\tilde{a}}\cap H_i$ is connected in ${\bf H}_{\tilde{a}}$, hence the conclusion. 

If $\tilde{a}=(b,j)$  for some $b\in B$ and $1\leq j\leq n_b$, then we a fortiori have that $H_{b}\cap H_i\neq\emptyset$. By instantiating again Definition \ref{anti-strict-definition} for $K:=H_{i}$, we get that $H_{b}\cap H_i$ is connected in ${\bf H}_b$. By construction, we have that $H_i\cap X_b=\emptyset$, which implies that $H_{b}\cap H_i$ is contained in a unique ${\bf H}_{(b,k)}$, for some $1\leq k\leq n_b$. By the assumption that $H_{(b,j)}\cap H_i\neq \emptyset$, we conclude that $k=j$. Therefore, $H_{(b,j)} \cap H_i = H_b \cap H_i$, and hence the conclusion.
\end{proof}

We now recall from \cite{PLBJ1} the definition of strict clan and of the associated shuffle product.  Let $\tau= (\{{\bf H}_a\,|\, a\in A\},{\bf H})$ be a strict team, i.e.,  a preteam satisfying the following property:
\begin{itemize}
\item[(S)]  for every $a\in A$ and every non-empty connected subset $\emptyset\neq K\inc H_a$, we have that $K$  is also connected in $\hyper{H}$.  
\end{itemize}
An equivalent definition  is that, for each   subset $\emptyset\neq B\subseteq A$, we have,
in the notation of Lemma \ref{oldway}, that for all $\tilde{a}\in \tilde{A}$, $H_{\tilde{a}}$  is included in some $H_i$, thus inducing preteams 
$\tau_i^B=(\{{\bf H}_{\tilde{a}}\,|\, \tilde{a}\in \tilde{A} \:\mathrm{and} \: H_{\tilde{a}}\inc H_i^B\},{\bf H}_i^B) $, 
for all $1\leq i\leq n_B$. 
A strict clan is a set $\Xi$ of strict teams which is closed under decomposition, i.e.,  if  $\tau\in\Xi$, then all induced preteams $\tau_i^B$ are in $\Xi$ as well (for all $B$).
We note that, writing $C_b=X_b(C_{(b,1)},\ldots,C_{(b,n_b)})$, 
a delegation $\delta$ of support $\tau$  induces delegations
$\delta_i^B=(\{C_{\tilde{a}}:{\bf H}_{\tilde{a}}\,|\, \tilde{a}\in \tilde{A} \:\mathrm{and} \: H_{\tilde{a}}\inc H_i^B\},{\bf H}_i^B) $ of support $\tau_i^B$. The definition of the shuffle product given in \cite{PLBJ1} is 
\begin{equation} \label{shuffle-strict-def}
\ast(\delta)= \sum_{\emptyset\incs B\subseteq A} q^{{|B|}-1}\ast_B(\delta), \;\mathrm{where}
\ast_B(\delta)=X_B(\ast(\delta_1^B),\ldots,\ast(\delta_{n_B}^B)).
\end{equation}

\begin{remark}
In the strict framework, we were led to define the shuffle product in the unbiased manner (in the sense that the number of participating hypergraphs is not limited to 2) by the following observation.  Starting from $\tau$ as above with $|A|=2$, it could happen that an induced team $\tau_i^B$ has more than two participating hypergraphs. On the other hand, this cannot happen in the anti-strict setting: the induced teams \eqref{lll} are always such that the number of their participating hypergraphs is at most 2, which makes it possible to define the shuffle product as a binary operation and derive the $n$-ary versions by associativity. Nevertheless, we chose the unbiased route here as well, in order to obtain a uniform framework for the shuffle product.
\end{remark}

\begin{proposition} \label{strict-antistrict-agree}
In a clan that is both strict and anti-strict, the definitions  \eqref{shuffle-strict-def} and
\eqref{shuffle-antistrict-def} coincide.
\end{proposition}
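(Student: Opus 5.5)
We argue by induction on the size of the coordinating hypergraph $\hyper{H}$, and show the stronger statement that $\ast_B(\delta)$ is the same under \eqref{shuffle-strict-def} and \eqref{shuffle-antistrict-def} for every $\emptyset\neq B\subseteq A$. Summing over $B$ then gives the proposition. In both definitions the root of $\ast_B(\delta)$ is $X_B$, and the subtrees are indexed by the components $\hyper{H}_1^B,\ldots,\hyper{H}_{n_B}^B$ of $\restrH{H}{X_B}$. So for each $i$ it suffices to prove
$$\ast(\delta_i^B)=\ast(\restrconstr{\delta}{H_i^B}),$$
the left side computed with the strict product and the right side with the anti-strict product. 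By the induction hypothesis (both $\tau_i^B$ and $\tau_{H_i^B}$ live in the clan and have smaller coordinating hypergraph $\hyper{H}_i^B$), both sides may be computed with the same definition. It therefore suffices to show that the two delegations $\delta_i^B$ and $\restrconstr{\delta}{H_i^B}$ coincide, supports included.

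Fix $i$ and write $K=H_i^B$. For $a\in A\setminus B$ with $H_a\cap K\neq\emptyset$, strictness gives $H_a\subseteq K$ (every $\tilde a$ lies in some $H_j$). So $(C_a)_{\lceil H_a\cap K}=C_a$ by the base case of Definition \ref{restriction-definition}, and the hypergraph is $(\hyper{H}_a)_K=\hyper{H}_a$. For $b\in B$, the participants of $\delta_i^B$ coming from $b$ are the $C_{(b,j)}$ with $H_{(b,j)}\subseteq K$. On the anti-strict side we have the single participant $(C_b)_{\lceil H_b\cap K}$, whenever $H_b\cap K\neq\emptyset$. The argument in the proof of Lemma \ref{oldway} shows that $H_b\cap K$ is connected in $\hyper{H}_b$ (by (AS)), and that it avoids $X_b$, so it lies in a unique $H_{(b,k)}$. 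Combining this with strictness, $H_{(b,k)}\subseteq K$, so $H_b\cap K=H_{(b,k)}$ and exactly one $(b,j)$, namely $j=k$, contributes on the strict side. Finally, since $H_b\cap K\cap X_b=\emptyset$, the first subcase of Definition \ref{restriction-definition} gives $(C_b)_{\lceil H_{(b,k)}}=(C_{(b,k)})_{\lceil H_{(b,k)}}=C_{(b,k)}$, and $(\hyper{H}_b)_{K}=\hyper{H}_{(b,k)}$.

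Hence the two families of participating hypergraphs and constructs agree term by term (with the reindexing $b\mapsto(b,k)$), and $\tau_i^B=\tau_{H_i^B}$. The step needing the most care is this identification for $b\in B$. It requires both hypotheses: anti-strictness forces at most one component $(b,j)$ to meet $K$, and strictness forces that component to be entirely contained in $K$. With these, the restriction unfolds to $C_{(b,k)}$ itself. Once the delegations coincide, the induction closes.
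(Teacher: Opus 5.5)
Your proof is correct and follows essentially the same route as the paper. The paper first rewrites the anti-strict $\ast_B(\delta)$ through the intermediate delegations $\varepsilon_i^B$, using only (AS) as in Lemma \ref{oldway}, and then uses strictness to identify $\varepsilon_i^B$ with $\delta_i^B$; you merge these two steps and make explicit the induction on $\hyper{H}$ that the paper leaves implicit. One small slip: the identity $\restrconstr{(C_a)}{H_a}=C_a$ (and likewise $\restrconstr{(C_{(b,k)})}{H_{(b,k)}}=C_{(b,k)}$) is not the base case of Definition \ref{restriction-definition}, but it follows by a one-line induction on the construct using the second clause with $X\cap H_a=X$.
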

\begin{proof}  We first prove that 
we can rephrase \eqref{shuffle-antistrict-def} as follows: if $\delta = (\{C_a:{\bf H}_a\,|\, a\in A\},{\bf H})$ is a delegation of support $\tau$, we have
$$\ast_B(\delta)=X_B(\ast(\varepsilon_1^B),\ldots,\ast(\varepsilon_{n_B}^B)),$$ where $$\varepsilon_i^B=(\setc{\restrconstr{(C_{\tilde a})}{H_{\tilde{a}}\inter H_i}}{H_{\tilde{a}}\inter H_i\neq\emptyset},\hyper{H}_i^B).$$ (By Lemma \ref{oldway}, the delegations $\varepsilon_i^B$ are well-defined.) This follows from two observations:
\begin{itemize}
\item if $H_b\cap H_i\neq\emptyset$, for $b\in B$, then there exists a unique $1\leq j\leq n_b$, such that $H_b\cap H_i=H_{(b,j)}\cap H_i$,
\item if ${C_b}_{\lceil_{H_b\cap H_i}}={C_{(b,j)}}_{\lceil_{H_{(b,j)}\cap H_i}}$,
\end{itemize} 
which entail that $\varepsilon_i^B$ as defined above is precisely $\delta\lceil_{H_i}$. 
On the other hand, the additional strictness assumption immediately gives $\delta_i^B=\varepsilon_i^B$, hence the conclusion.
\end{proof}

\begin{remark} \label{quasi-strict-and-antistrict}
Proposition
\ref{strict-antistrict-agree} actually also holds replacing ``strict'' by ``quasi-strict''.   A quasi-strict team is defined as a strict team, except that (with the notation above)
\begin{itemize}
\item[$(\dagger)$]  it is allowed for $H_{\tilde{a}}$ not to be included in some $H_i^B$, in which case it is required that each element of $H_{\tilde{a}}$ is  among the $H_i^B$'s. 
\end{itemize}  Then one replaces the indexing set $\tilde{A}$ by 
$\overline{A}:=(\tilde{A}\backslash \tilde{A}_d) + \union_{\tilde{a}\in \tilde{A}_d} H_{\tilde{a}}$, where
$\tilde{A}_d$ is the set of elements $\tilde{a}\in \tilde{A}$ such that  ($\dagger$) applies.  The construct associated with each element $\overline{x}$ of some $H_{\tilde{a}}$ with $\tilde{a}\in\tilde{A}_d$ is just $\overline{x}$.
And the definition of shuffle product is adapted, mutatis mutandis (we refer to \cite{PLBJ1} for details). The proof of Proposition  \ref{strict-antistrict-agree} is easily adapted by noticing that, for $\tilde{a}$ and $\overline{x}$ as above, with $\set{\overline{x}}=H_i^B$, we have trivially $\overline{x}=\restrconstr{(C_{\tilde{a}})}{H_{\tilde{a}}\cap H_i^B}$.
\end{remark}

\begin{remark} \label{not-subsume}
We note however that, in the strict case, the assumption $q=-1$ is not necessary (we needed
this assumption only for the quasi-strict case).
\end{remark}

We end the section by pointing out that most examples from \cite{PLBJ1} are in fact also anti-strict examples.
The verifications are left to the reader.

\begin{example} \label{strict-antistrict-examples} Our key examples of strict clans are
associahedra and permutohedra. Both of them are anti-strict.
\begin{itemize}
\item Associahedra. We take as universe the collection of all associahedra. The strict clan associated with associahedra is obtained by considering the set of all  preteams $$(\{\hyper{K}^{V_1},\ldots,\hyper{K}^{V_n}\}, \hyper{K}^V),$$ where $V_1,\ldots,V_n$ form successive intervals of $V$. The reader will notice that this is a special case of what we considered in Example \ref{treesassoc}: here, we concatenate linear graphs rather than insert them into one another   in some fancier way: this is the price to pay to maintain strictness (while preserving anti-strictness).
\item Permutohedra.  We take as universe the collection of all permutohedra. The strict clan associated with permutohedra is obtained by considering the set of all  preteams $$(\{\hyper{P}^{V_i}\}_{i \in I}, \hyper{P}^V),$$ where $\{V_i\}_{i \in I}$ forms a partition of $V$.
\end{itemize}
\end{example}

\begin{example} \label{quasistrict-antistrict-examples}
We group here our three examples of quasi-strict clans that are not strict, based on simplices, hypercubes, and erosohedra, respectively. Each of them is anti-strict.
\begin{itemize}
\item Simplices. The universe formed by all simplices $\hyper{S}^X$ (for a  finite set $X$) gives rise to the quasi-strict clan formed by all preteams of the form 
$$(\setc{\hyper{S}^{X_a}}{a\in A},\hyper{S}^{\Union X_a})$$ (for mutually disjoint $X_a$).  
\item Hypercubes.  The universe formed by all   hypercubes $\hyper{C}^X$ (for $X=\set{x_1<\cdots<x_n}$)  gives rise to the  quasi-strict clan formed by all preteams of the form 
$$(\{\hyper{C}^{X_1},\ldots,\hyper{C}^{X_n}\},\hyper{C}^{\Union X_i}),$$ where $\Union_{1\leq i\leq n} X_i$ is endowed with the  order in which  $X_1,\ldots,X_n$ form successive intervals. 
\item Erosohedra. The family of \emph{erosohedra} is given by:
\begin{align*}
 \hyper{E}^X=\set{\set{x_1},\ldots,\set{x_n},\set{x_1,\ldots,x_n}}\cup \setc{\setc{x_j}{j \neq i}}{1\leq i\leq n},
\end{align*} 
where $X=\{x_1, \ldots, x_n\}$. It gives rise to a quasi-strict clan containing all preteams of the following two forms:
$$(\set{\hyper{E}^{X_1},\ldots, \hyper{E}^{X_n}},\hyper{E}^{\sqcup X_i}) \mbox{ and }  
(\set{\hyper{H}^{X_1},\ldots, \hyper{H}^{X_n}},\hyper{S}^{\sqcup X_i}),$$ where all the $\hyper{H}^{X_i}$'s are erosohedra except one which is a simplex. 
\end{itemize} \end{example}

Here is the only example from \cite{PLBJ1} which is strict but not anti-strict.
\begin{example} \label{friezohedra-non-example}
 Consider the infinite graph $\Friezo$ on $\mathbb{Z}$ with the set of edges $\{(x,y) ||x-y|\leq 2 \}$, and take as universe the set of all its restrictions $\Friezo_X$ to finite sets $X=\{x_1<\dots<x_n\} \subseteq \mathbb{Z}$ such that $\Friezo_X$ is connected. We call these graphs \emph{friezohedra}. Note that $\Friezo_X$  is connected  exactly when there is no $i$ such that $x_{i+1}-x_i>2$. The strict clan associated with friezohedra is obtained by considering the set of all strict teams $$(\{\hyper{F}_{V_i}\}_{i \in I}, \hyper{F}_V),$$ where $\{V_i\}_{i \in I}$ forms a partition of $V$ and each of the hypergraphs $\hyper{F}_{V_i}$, as well as $\hyper{F}_{V}$, are connected.  
 We shall exhibit  a preteam in this strict clan that is not anti-strict. Take $V=\set{1,2,3,4,5}$,  $V_1=\set{1,2,4}$ and $V_2=\set{3,5}$, and consider $\tau=(\set{\hyper{F}_{V_1},\hyper{F}_{V_2}},\hyper{F}_V)$. Let $K=\set{1,3,4}$. We have that $K$ is connected in $\hyper{F}_{V}$, but $K\cap V_1=\set{1,4}$ is not connected in $\hyper{F}_{V_1}$.
\end{example}

We end with another example of a strict clan which is not anti-strict, not present in \cite{PLBJ1}.
\begin{example}[Strict clan which is not anti-strict]\label{strict-not-antistrict}
Consider the preteam $\tau=(({\bf S}^{\{1,2,3\}},{\bf S}^{\set{4}}),{\bf K}^{\{1,2,3,4\}})$. Recall from Section \ref{fundamental-examples-section} that the  participating hypergraphs ${\bf S}^{\{1,2,3\}}$ and ${\bf S}^{\set{4}}$ are simplices  (the first one on three vertices and  the other one on a single vertex) and that the coordinating hypergraph ${\bf K}^{\{1,2,3,4\}}$ is the associahedron on four vertices.   It is easy to verify that $\tau$ has the  property (S). On the other hand, the connected subset $K=\{1,2\}$ of  ${\bf K}^{\{1,2,3,4\}}$ gets disconnected in $K\cap {S}^{\{1,2,3\}}=K$, which is a counter-example to $\tau$ being anti-strict.

This toy example readily expands to an example of a strict clan which is not anti-strict: we consider the universe consisting of simplices and associahedra, and we  take $\Xi$  to be the set of   preteams $$(({\bf S}^{X_1},\dots,{\bf S}^{X_n}),{\bf K}^{\bigcup_{1\leq i\leq n} X_i}),$$ for some $n\geq 1$, where $X_1<X_2<\cdots < X_n$. To see that $\Xi$ is strict, notice that each $X_i$ is connected in ${\bf K}^{\bigcup_{1\leq i\leq n} X_i}$. On the other hand, $\Xi$ is not anti-strict as it contains the team $\tau$ from above.
\end{example}

\begin{remark}\label{restrictohedra-remark}
In  \cite{PLBJ1}, we have introduced universes obtained by taking all possible connected restrictions $\hyper{H}_X$ of a fixed ``big'' hypergraph $\hyper{H}$. We called them \emph{restrictohedra} and showed that they all give rise to strict clans. They include associahedra, permutohedra, and friezohedra.  It then follows from Example \ref{friezohedra-non-example} that
not all clans coming from restrictohedra are anti-strict.
\end{remark}

\begin{proposition} 
Strictness and  anti-strictness are incomparable notions, i.e. there exist universes with clans that are anti-strict and not strict, and there are universes with clans that are strict but not anti-strict.
\end{proposition}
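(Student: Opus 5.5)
The plan is to exhibit two explicit witnesses, one for each direction, reusing the examples already set up in the paper. The proof is then a matter of pointing to these examples and checking the relevant (non-)membership conditions.

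For the direction ``anti-strict but not strict'', I will take the universe of all associahedra and the clan of trees of associahedra from Example \ref{treesassoc}. That example already verifies that every preteam of the form $(\{{\bf K}^{l(T,a)}\,|\,a\in A\},{\bf K}^{l(T)})$ satisfies (AS). It also verifies that the set of such preteams is closed under the decomposition $\tau\mapsto\tau_K$, so it is an anti-strict clan in the sense of Definition \ref{antistrict-clan-definition}. To show it is not strict, it suffices to exhibit one team violating (S). I will use the tree $a(1,b(2,3),4)$: the set $\{1,4\}$ is connected in ${\bf K}^{\{1,4\}}$, the participating hypergraph at $a$, but it is not connected in ${\bf K}^{\{1,2,3,4\}}$ because $1$ and $4$ lie at distance $3$ in the path. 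Alternatively, the clan of Example \ref{excyc} contains this clan and therefore works as well.

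For the direction ``strict but not anti-strict'', I will use Example \ref{strict-not-antistrict}: the universe of simplices and associahedra, with the clan of preteams $(({\bf S}^{X_1},\dots,{\bf S}^{X_n}),{\bf K}^{\bigcup X_i})$ where $X_1<\dots<X_n$ are successive intervals. There are three things to check.
\begin{enumerate}
\item Strictness: the only connected subsets of ${\bf S}^{X_i}$ are singletons and $X_i$ itself, and both are intervals, hence connected in the associahedron.
\item Closure under the decomposition $\tau_i^B$: removing a union of proper roots from the $X_b$'s leaves pieces that are again unions of consecutive blocks. Each surviving $H_{\tilde a}$ is either a singleton or a whole $X_a$, so each induced preteam is again of the stated form, with singletons being trivial simplices.
\item Failure of anti-strictness: the team with ${\bf S}^{\{1,2,3\}},{\bf S}^{\{4\}}$ over ${\bf K}^{\{1,2,3,4\}}$ and $K=\{1,2\}$ gives $K\cap\{1,2,3\}=\{1,2\}$, which is disconnected in the simplex ${\bf S}^{\{1,2,3\}}$.
\end{enumerate}
The friezohedra clan of Example \ref{friezohedra-non-example} offers a second witness, already shown to be strict in \cite{PLBJ1}.

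I expect the main obstacle to be the closure condition in the strict example, namely that the induced preteams $\tau_i^B$ really stay inside the clan, especially when the $X_b$ are proper subsets of the blocks. I would handle this by noting that after removing $X_B$, each connected component $H_i^B$ is an interval. A remnant $X_a\setminus X_a$-root of a simplex splits into singletons, because ${\bf S}^{X_a}\setminus Y$ is discrete. Singletons are simplices, and their order inside $H_i^B$ is again block-successive, so the induced preteam is in the clan. If this verification becomes delicate, I would fall back on the friezohedra witness, whose clan status is established in \cite{PLBJ1}.
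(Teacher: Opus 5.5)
Your proposal is correct and follows the paper's own proof, which simply cites Example~\ref{treesassoc} (anti-strict but not strict, via the tree $a(1,b(2,3),4)$) and Example~\ref{strict-not-antistrict} (strict but not anti-strict, via $K=\{1,2\}$). Your check that the simplex--associahedron clan is closed under the decompositions $\tau_i^B$ fills in a detail the paper leaves implicit. It also covers the case where a root is the whole block $X_b$, which you called ``proper'' but which simply leaves no remnant.
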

\begin{proof}
By Examples \ref{treesassoc} and \ref{strict-not-antistrict}.
\end{proof}

It can also be checked that all the examples of anti-strict  clans given in the present paper are associative.

\subsection{An operadic formulation} \label{operadic-section}
In this section (and only in this section), we assume the reader to have a basic familiarity with the notion of operad and of algebra over an operad (see e.g. \cite{MARKL} for a short introduction, and \cite{MSS} or \cite{LV} for book-length accounts). We define the polydendriform operad ${\bf Polydend}_C$ as a linear, coloured, non-skeletal, non-unital and componential operad, which is parameterised  by a class $C$ of colours, and we recast both the strict and anti-strict settings in terms of algebras for this operad (instantiated by taking the collection of colours to be a collection of  finite hypergraphs). We briefly recall the meaning of the terms used above to qualify the flavour of the operad that we are about to  define:

\begin{itemize}
\item linear means that the operation spaces are vector spaces (here over $\mathbb{R}$);
\item coloured means ``multisorted" or ``multicategorical" : each operation has a codomain and several domains, associated with each of its inputs;
\item non-skeletal means that the sets of operations are not indexed over natural numbers (recording the number of inputs) but by finite sets (giving names to the input positions) (see \cite{Obradovic17,MSS});
\item non-unital means that there is no unit nor unit laws, i.e., there are only compositions of operations;
\item componential (also called partial, or ``circle $i$" ) means that operations are composed in pairs, one receiving the other on one of its inputs (the one named $i$)\footnote{It is known (see \cite{MARKL}) that in the absence of units, the componential definition is not equivalent to the other (more standard) definition of composition, where one operation receives operations on all of its inputs at once.}.
\end{itemize}

\smallskip
In such an operad ${\cal O}$ , one is given the following data.
\begin{enumerate}
\item  for each finite set $I$  (a set of input positions) and  each {\em type} $(\set{c_i}_{i\in I},c)$, consisting of  an indexed family assigning  colours to all input positions, and an output colour:
a vector space ${\cal O}(\set{c_i}_{i\in I},c)$;
\item an action of bijections $\sigma:I\rightarrow J$ on those sets;
\item and for each $I$, $i_0\in I$ and $J$ such that $I\setminus\set{i_0}$ and $J$ are disjoint, and for each triple made of an operation $f\in {\cal O}(\set{c_i}_{i\in I},c)$,  a choice of $i_0\in I$, and an operation
$g\in {\cal O}(\set{c'_j}_{j\in J},c_{i_0})$: an operation $f\circ_{i_0}  g\in {\cal O}(\set{c''_k}_{k\in (I\setminus\set{i_0}\cup J},c)$, where $c''_k=c_k$ if $k\in I$ and
$c''_k=c'_K$ if $k\in J$, subject to axioms of associativity and equivariance.
\end{enumerate}

The operad ${\bf Polydend}_C$ is the operad presented by the following generators and relations.
\begin{itemize}
\item
For each type  $\alpha = (\set{c_i}_{i\in I},c)$, we have generators $\ast^\alpha$ and
$\ast_{I'}^\alpha$ where $I'$ ranges all  subsets of $I$.  
We abbreviate $\ast_\emptyset^\alpha$ as $\ast$.
\item 
We ask for the following relations to hold, where $\alpha = (\set{c_i}_{i\in I},c)$, 
$\beta=(\set{c'_j}_{j\in J},c_{i_0})$, and $\gamma=(\set{c''_k}_{k\in K},c)$  are as above:
%
$$\begin{array}{ll}
\ast^\alpha =  \sum_{\emptyset\neq B \subseteq I} q^{|B|-1}\:\ast_{B}^\alpha & \\\\
\ast^\gamma_{K'} = \ast^\alpha_{(K'\cap I)\cup\set{i_0}}\circ_{i_0}\ast^\beta_{K'\cap J} & \textrm{for all}\; \emptyset \neq K'\inc K.
\end{array}$$
\end{itemize}

We now take $C$ to be a certain collection $\mathfrak{U}$ of finite hypergraphs -- what we called above a universe. 
Then types are what we called preteams on this universe. Let us fix an anti-strict clan $\Xi$ on $\mathfrak{U}$  
(cf. Definition \ref{antistrict-clan-definition}). 
For a preteam $\tau=(\{{\bf H}_a\,|\, a\in A\},{\bf H})$,
\begin{itemize}
\item if $\tau\in \Xi$, we interpret the generators $\ast^\tau_{\emptyset}$ and $\ast^\tau_B$ (for $B$ a non-empty subset of $A$) as the functions $\ast$ and $\ast_B$ defined in 
Section \ref{shuffle-definition-section};
\item  if $\tau\not\in \Xi$, we interpret  all
the corresponding generators as the constant $0$ function.
\end{itemize}
Then (up to some obvious notational changes) Theorem \ref{assoc-antistrict} amounts to affirm that the above defines an algebra on ${\bf Polydend}_{\mathfrak{U}}$\footnote{Strictly speaking, Theorem \ref{assoc-antistrict} takes care only of a half of the associativity laws -- the so-called sequential one, but the rest of the laws, i.e., parallel associativity (in which two operations are plugged on two inputs) and equivariance, is straightforward to check.}.

\smallskip
The same holds for ``strict'' in the place of ``anti-strict''. 

\section{Comparison with other works} \label{comparison-section}
In this section  we discuss the works of  Ronco \cite{RoncoGTO}, Chapoton  \cite{Cha1,Cha2} and Forcey-Springfield \cite{FS}, and draw some comparisons with our work.  
\subsection{The framework of Maria Ronco  \cite{RoncoGTO}}
 In \cite{RoncoGTO},  Mar\' ia Ronco defines a  product $\ast$   for certain families of graph associahedra. More precisely,  she  gives  a condition,  that we shall refer to as (R1) below,  that allows her to state her definition and characterise it in terms of what she calls a generalised Tamari order.
 Moreover, she proves the associativity of $\ast$ under a strengthened condition, below referred to as (R2).

\smallskip

We shall now reformulate the construction of Ronco in the language of preteams.  In our setting,  Ronco's    condition  (R1)  translates as the following requirement on preteams. We shall say that a  preteam $(\{{\bf H}_a\,|\,a\in A\},{\bf H})$ is an (R1){\em -team} if it satisfies the following property:
\begin{itemize}
\item[(R1)] for every $a\in A$ and every subset $\emptyset\neq K\subseteq H_a$, we have that,  if  $K$ is connected in ${\bf H}$, then it is   connected in ${\bf H}_a$. 
\end{itemize}
 The strengthened  condition (R2) of Ronco additionally requires strictness for (R1)-teams, i.e.,  (R2) =  (R1) $\cap$ (S) (see \S \ref{strict-and-antistrict-coincidence-section}).  

\smallskip 

As for the relation between Ronco's conditions and the concepts introduced in this paper (and in its predecessor \cite{PLBJ1}), we note the following facts.
  \begin{itemize}
\item By definition, (AS) implies (R1).
  \item By Example \ref{stellohedra-example} below,  (R1) does not imply (AS).
  \item By Example \ref{teleass} below,  (S) does not imply (R1).
\end{itemize}
In addition, under the (R2) condition, by Proposition 4.24 of \cite{PLBJ1}, Ronco's product coincides with our product, for $q=1$. Hence our strict framework is more general than (R2), even when restricting ourselves to graph associahedra and to taking $q=1$. Another difference is that we additionally provide a polydendriform decomposition of the product, which is absent in Ronco's approach. 

We sum up there relations on the Figure \ref{schemaMaria}.

\begin{figure}[H]
\centering
 \begin{tikzpicture}
\draw[very thick,teal!40,fill=teal!20,fill opacity=0.4] (1.956,0) [out=90,in=200,looseness=0.86] to (2.5,0.715) [out=25,in=90]to (5.8,0);
\draw[very thick,teal!40,fill=teal!20,fill opacity=0.4] (1.956,0) [out=-90,in=160,looseness=0.86] to (2.5,-0.715) [out=-25,in=-90]to (5.8,0);
\draw[very thick,fill=red!20,fill opacity=0.7]   (0,0) [out=90,in=90] to (3,0)  [out=-90,in=-90] to (0,0);
\draw[very thick,fill=red!20,fill opacity=0.7] (2,0) [out=90,in=90] to (5,0)  [out=-90,in=-90] to (2,0);
\draw[very thick,blue!40,fill=blue!20,fill opacity=0.4](2.956,0)[out=90,in=-35] to (2.5,0.64) [out=170,in=90] to(1.1,0);
\draw[very thick,blue!40,fill=blue!20,fill opacity=0.4](2.956,0)[out=-90,in=35] to (2.5,-0.64) [out=190,in=-90] to(1.1,0);
\draw[very thick,teal!40,fill=teal!20,fill opacity=0.4](2.475,0.69)[out=175,in=90]to(1.06,0) to[out=-90,in=185] (2.475,-0.69) [out=-25,in=-90,looseness=0.9]to (5.8,0) to[out=90,in=25] (2.47,0.69);
\draw[very thick]   (0,0) [out=90,in=90] to (3,0)  [out=-90,in=-90] to (0,0);
\draw[very thick] (2,0) [out=90,in=90] to (5,0)  [out=-90,in=-90] to (2,0);
\draw[very thick,teal!40,opacity=0.7](2.475,0.69)[out=175,in=90]to(1.06,0) to[out=-90,in=185] (2.475,-0.69) [out=-25,in=-90,looseness=0.9]to (5.8,0) to[out=90,in=25] (2.47,0.69);
  \node at (2.5,0) {\footnotesize \ref{strict-antistrict-examples}};
\node at (4.2,0) {\footnotesize \ref{quasistrict-antistrict-examples}};
 \node at (3.5,0.25) {\footnotesize \ref{treesassoc}};\node at (5.35,-0.12) {\footnotesize \ref{stellohedra-example}}; 
\node at (3.5,-0.25) {\footnotesize \ref{excyc}}; 
\node at (0.7,0) {\footnotesize \ref{teleass}}; \node at (1.6,-0.12) {\footnotesize \ref{fgh}};
  \node at (1.5,-1.2) {\bf (S)};   \node at (3.5,-1.2) {\bf (AS)};
  \node at (5.35,0.22) {\small\color{teal!60}\bf  (R1)};   \node at (1.6,0.22) {\small\color{blue!60}\bf (R2)};
\end{tikzpicture}
\caption{Relations between the conditions (S), (AS), (R1) and (R2) on preteams. The numbers appearing in the different regions refer to examples in the text that illustrate the corresponding (combination of) properties. The associativity property holds only in the red part of the diagram, for any $q$ in (S) and for $q=-1$ in (AS). }
\label{schemaMaria}
\end{figure}
 
\begin{example}[The universe of stellohedra]  \label{stellohedra-example}
Let $(X,x)$ be a pointed set, i.e., a non-empty set $X$ with a distinguished element $x\in X$. The $(|X|-1)$-dimensional  \emph{stellohedron} is the polytope for the following graph:
$$\hyper{St}^{X,x}=\set{\setc{x}{x\in X}} \cup\setc{\set{x,y}}{y\in X\setminus\set{x}}.$$
We note that a non-singleton  subset $Y$ of $St^{X,x}$ is connected if and only if $x \in Y$.
Consider a preteam of the form 
$\tau=(\setc{\hyper{St}^{X_a,x_a}}{a\in A},\hyper{St}^{\sqcup_{a\in A} X_a, x_b})$ 
where $b$ is some chosen element of $A$ and all the $X_a$'s are disjoint.  Then $\tau$ is an (R1)-team.  Indeed, let, for some $a\in A$, $K\inc H_a$  be such that  $|K|>1$ and $K$ is connected in $\hyper{St}^{\sqcup_{a\in A} X_a, x_b}$. 
Then we have $b\in K$, which in turn entails that  $a=b$  and that $K$ is connected in $\hyper{St}^{X_a,x_a}$.

On the other hand,  $\tau$ is not anti-strict in general. Consider any $a\neq b$  and any $G\inc X_a\setminus{\{x_a\}}$, Then $K:=G\sqcup\set{b}$ is connected in $\hyper{St}^{\sqcup_{a\in A} X_a, x_b}$ but $K\cap St^{X_a,x_a}=G$ is not connected in $\hyper{St}^{X_a,x_a}$, as soon as $\hyper{St}^{X_a,x_a}$ has cardinal $>2$.
\end{example}

\begin{example}\label{fgh}
Referring to Example \ref{friezohedra-non-example}, showing that friezohedra form a strict clan which is not anti-strict,  we  also observe that    the corresponding preteams are (R1)-teams in the sense of Ronco. Therefore, they are (R2), and hence the class (R2) is broader than (S) $\cap$ (AS).
\end{example}

\begin{example}[The universe of teleassociahedra]\label{teleass}
Let $V\inc\mathbb{Z}$ be a non-empty finite set of integers, and let $d$ be a positive integer. We call $(V,d)$-\emph{teleassociahedron} the polytope for the hypergraph
\begin{equation}
    \hyper{T}^d_V:=\{\{i\}\,|\, i \in V\} \cup \{\{i,j\}\,|\, i,j \in V, i < j \mbox{ and } |i-j| \leq d\},
\end{equation}
when this hypergraph is connected.  

We see that two vertices of $\hyper{T}^d_V$  are connected by an edge whenever their distance is ``within the connectivity radius $d$'', so that, as $d$ grows (on a fixed vertex set $V$), the graph becomes denser (see Figure \ref{tas}). In particular,  for $V=\set{1,\ldots,n}$ and $d=1$, we have that $\hyper{T}^1_V={\bf K}^V$, while, for arbitrary $V$ and an integer $d$ such that
$d\geq \max(V)-\min(V)$,  we get that $\hyper{T}^d_V={\bf P}^V$. Therefore, we recover associahedra and permutohedra as particular instances of teleassociahedra. 

\begin{figure}[H]
\centering
\resizebox{3cm}{!}{\begin{tikzpicture}
\node[v] (1) at (0,0) {1};
\node[v] (2) at (1,1) {2};
\node[v] (3) at (1,0) {3};
\node[v] (4) at (2,1) {4};
\node[v] (5) at (2,0) {5};
\node[v] (6) at (3,1) {6};
\draw (6)--(5)--(4)--(3)--(2)--(1);
\end{tikzpicture}} \quad \resizebox{3cm}{!}{\begin{tikzpicture}
\node[v] (1) at (0,0) {1};
\node[v] (2) at (1,1) {2};
\node[v] (3) at (1,0) {3};
\node[v] (4) at (2,1) {4};
\node[v] (5) at (2,0) {5};
\node[v] (6) at (3,1) {6};
\draw (6)--(5)--(4)--(3)--(2)--(1);
\draw (1)--(3)--(5);
\draw(2)--(4)--(6);
\end{tikzpicture}} \quad \resizebox{3cm}{!}{\begin{tikzpicture}
\node[v] (1) at (0,0) {1};
\node[v] (2) at (1,1) {2};
\node[v] (3) at (1,0) {3};
\node[v] (4) at (2,1) {4};
\node[v] (5) at (2,0) {5};
\node[v] (6) at (3,1) {6};
\draw (6)--(5)--(4)--(3)--(2)--(1);
\draw (1)--(3)--(5);
\draw (2)--(4)--(6);
\draw (1)--(4);
\draw (2)--(5);
\draw (3)--(6);
\end{tikzpicture}}
\caption{From left to right: teleassociahedra $\hyper{T}^1_{V}$, $\hyper{T}^2_{V}$ and $\hyper{T}^3_{V}$, for $V=\set{1,2,3,4,5,6}$.}\label{tas}
\end{figure}

 In order to state a connectedness criterion for $\hyper{T}^d_V$, we first note that every finite subset $\emptyset \neq V\subseteq \mathbb{Z}$ can be written uniquely as a disjoint union  $V=\sqcup_{i=1}^n V_i$, for some $n \geq 1$, such that  each $V_i$  is an interval of $\mathbb{Z}$,
 and such that $\min(V_i)-\max(V_{i-1})>1$, for each $1 < i \leq n$; we shall denote this presentation of $V$ by writing $V=V_1 <\ldots < V_n$. Then the hypergraph  $\hyper{T}^d_V$ is connected if and only if $d\geq\max(\min(V_i)-\max(V_{i-1}))$, for each $1 < i \leq n$.     

\smallskip

The universe formed by all  teleassociahedra gives rise to a strict clan $\Xi_T$, formed by all preteams of the form 

$$\left(\left( \hyper{T}^{d_1}_{V_1}, \ldots, \hyper{T}^{d_n}_{V_n} \right), \hyper{T}^d_V\right),$$ where $V= V_1 <\ldots < V_n$ and $d \geq \max_{1\leq i \leq n} d_i$.
Indeed, consider the following observations:
\begin{itemize}
\item[1)] If $W\inc V$, we have $(\hyper{T}^d_V)_W=\hyper{T}^d_W$.
\item[2)] For any $X\inc V$, we can write $\hyper{T}^d_V, X\leadsto T_{W_1}^d,\ldots,T_{W_k}^d$ in such a way that
$V\backslash X=W_1 < \cdots < W_k$.  To see this, suppose, say, that $i,j,x,y,z$ are such that  $x,y\in W_i$, $z\in W_j$ and
$x<z<y$. Then by connectedness we have $(y-x)\leq d$, but then a fortiori $(y-z)\leq d$, which implies that
$T_{W_i\cup\set{z}}^d$ is connected, contradicting the maximality of $W_i$ as a connected component of $\hyper{T}^d_V\backslash X$.
\item[3)] If $d\leq e$, $V\inc W$ and $\hyper{T}^d_V$ is connected, then $V$ is connected in $\hyper{T}^e_W$.
\end{itemize}
 The property (S) for a preteam as above is proved as follows. Fix an index $1\leq i\leq n$ and a non-empty connected subset $\emptyset \neq K\subseteq T^{d_i}_{V_i}$. By observation 1), we have that $(\hyper{T}^{d_i}_{V_i})_K=\hyper{T}^{d_i}_K$. By the connectedness assumption for $K$, we know  that $\hyper{T}^{d_i}_K$ is connected. 
 Then, by observation 3), since $d_i\leq d$, $K\subseteq V$ and ${\bf T}^{d_i}_{K}$ is connected, we have that $K$ is connected in $\hyper{T}^d_V$. It remains to show that
$\Xi_T$ is closed under decomposition, as required by the definition of a strict clan (recalled in \S\ref{quasi-strict-and-antistrict}). This is a direct consequence of  observation 2), which shows that preteams induced by decompositions of the form \eqref{decomp} have the same shape as above.

\smallskip

 The clan $\Xi_T$ does not fit in Ronco's framework, as it fails to satisfy the condition (R1). For instance, consider the strict team $\left( \left( \hyper{T}_{\set{1,2,3}}^1, \hyper{T}^1_{\{4\}} \right), \hyper{T}_{\set{1,2,3,4}}^2 \right)$. The edge $\{1,3\}$ belongs to $\hyper{T}^2_{\set{1,2,3,4}}$, but not to  $\hyper{T}_{\set{1,2,3}}^1$.

\end{example}
 \subsection{The framework of Fr\' ed\' eric Chapoton \cite{Cha1}}
In \cite{Cha1}, Chapoton defines  Hopf algebras on the faces of permutohedra, associahedra and hypercubes. 
While the algebra part of these Hopf algebras coincide with our product on permutohedra and associahedra, this is not the case for hypercubes, as we show below. 
 He uses the well-known description of the $(n-1)$-dimensional permutohedron  in terms of surjections $[n]\doublearrow{}[d]$,  for $n\geq 1$ and $1\leq d\leq n$. Given such a surjection $f$, he defines a word $w(f)$ of 
length $n-1$ over the alphabet $\{+,-,\bullet\}$ as follows: $$w(f)(i)=\begin{cases}
+ , \mbox{ if } f(i+1)>f(i) \\
\bullet , \mbox{ if } f(i+1)=f(i) \\
- , \mbox{ if } f(i+1)<f(i) 
\end{cases}.$$
The correspondence $w$ extends to a surjection from the poset of faces of the $(n-1)$-dimensional permutohedron to the poset of faces of the $(n-1)$-dimensional hypercube. We give $w$ explicitly for $n=3$  in Figure \ref{formap}, where we also translate Chapoton's words (blue) to our words (red).

\begin{figure}[H]
\quad\quad\quad\begin{tabular}{rcl}
\resizebox{4cm}{!}{\begin{tikzpicture}[scale=5]
\def\s{1}
\def\t{0.333333}
\coordinate (A) at (0,0);
\coordinate (B) at (\s,0);
\coordinate (C) at ({\s/2},{0.8660254*\s});
\coordinate (A1) at ($(A)!\t!(B)$);
\coordinate (A2) at ($(A)!\t!(C)$);

\coordinate (B1) at ($(B)!\t!(C)$);
\coordinate (B2) at ($(B)!\t!(A)$);

\coordinate (C1) at ($(C)!\t!(A)$);
\coordinate (C2) at ($(C)!\t!(B)$);
\node (a1) at ($(A1)+(-0.075,-0.04)$)  {\footnotesize $2(1(3))$};
\node (a2) at ($(A2)+(-0.1,0)$)  {\footnotesize $2(3(1))$};
\node (b2) at ($(B2)+(0.075,-0.04)$)  {\footnotesize $1(2(3))$};
\node (b1) at ($(B1)+(0.1,0)$)  {\footnotesize $1(3(2))$};
\node (c1) at ($(C1)+(-0.075,0.04)$)  {\footnotesize $3(2(1))$};
\node (c2) at ($(C2)+(0.075,0.04)$)  {\footnotesize $3(1(2))$};
\node (ab) at (0.5,-0.04)  {\footnotesize $\{1,2\}(3)$};
\node (ac) at (0.13,0.45) {\footnotesize $\{2,3\}(1)$};
\node (bc) at (0.87,0.45) {\footnotesize $\{1,3\}(2)$};
\node (d) at (0.5,0.62) {\footnotesize $3(\{1,2\})$};
\node (e) at (0.16,0.1) {\footnotesize $2(\{1,3\})$};
\node (f) at (0.84,0.1) {\footnotesize $1(\{2,3\})$};

\node (a1') at ($(A1)+(0.03,0.03)$)  {\color{blue} \footnotesize $-+$};\node (a1') at (0.31,0.15)  {\color{blue} \footnotesize $-+$};\node (a1') at (0.225,0.285)  {\color{blue} \footnotesize $-+$};
\node (b2') at ($(B2)+(-0.03,0.03)$)  {\color{blue} \footnotesize $++$};\node (b2'') at (0.69,0.15)  {\color{blue} \footnotesize $+\bullet$};\node (b2''') at (0.775,0.285)  {\color{blue} \footnotesize $+-$};
\node (ab') at (0.5,0.03)  {\color{blue}\footnotesize $\bullet +$};
\node (c1') at ($(C1)+(+0.03,-0.03)$)  {\color{blue} \footnotesize $--$};\node (c1'') at (0.31,0.425)  {\color{blue} \footnotesize $-\bullet$};\node (c1''') at (0.69,0.425)  {\color{blue} \footnotesize $+-$};\node (c1'''') at ($(C2)+(-0.03,-0.03)$)  {\color{blue} \footnotesize $+-$};
\node (ac') at (0.5,0.545)  {\color{blue}\footnotesize $\bullet -$};
\draw[dotted]
  (A) -- (B) -- (C) -- cycle;

\draw[thick]
  (A1) -- (B2) -- (B1) -- (C2) -- (C1) -- (A2) -- cycle;

\end{tikzpicture}} \!\!\!&\!\raisebox{1.35cm}{$\xmapsto{\enspace w\enspace }$}\!&\!\!\!\!\!\!\!\!\!\!\!\!\resizebox{5cm}{!}{\begin{tikzpicture}[scale=4.5]

\def\s{1}

\def\t{0.333333}

\coordinate (A) at (0,0);
\coordinate (B) at (\s,0);
\coordinate (C) at ({\s/2},{0.8660254*\s});

\coordinate (A1) at ($(A)!\t!(B)$);
\coordinate (A2) at ($(A)!\t!(C)$);

\coordinate (B1) at ($(B)!\t!(C)$);
\coordinate (B2) at ($(B)!\t!(A)$);

\coordinate (C1) at ($(C)!\t!(A)$);
\coordinate (C2) at ($(C)!\t!(B)$);
\node (a1) at ($(A)+(-0.075,-0.04)$)  {\footnotesize $2(1,3)$};
\node (b2) at ($(B)+(0.075,-0.04)$)  {\footnotesize $1(2,3)$};
\node (c1) at ($(C1)+(-0.075,0.04)$)  {\footnotesize $3(2(1))$};
\node (c2) at ($(C2)+(0.075,0.04)$)  {\footnotesize $3(1(2))$};
\node (ab) at (0.5,-0.04)  {\footnotesize $\{1,2\}(3)$};
\node (ac) at (0.13,0.45) {\footnotesize $\{2,3\}(1)$};
\node (bc) at (0.87,0.45) {\footnotesize $\{1,3\}(2)$};
\node (d) at (0.5,0.62) {\footnotesize $3(\{1,2\})$};

\node (a1') at ($(A)+(0.07,0.03)$)  {\color{blue} \footnotesize $-+$};
\node (b2') at ($(B)+(-0.07,0.03)$)  {\color{blue} \footnotesize $++$}; 
\node (ab') at (0.5,0.03)  {\color{blue}\footnotesize $\bullet +$};
\node (c1') at ($(C1)+(+0.03,-0.03)$)  {\color{blue} \footnotesize $--$};\node (c1'') at (0.31,0.425)  {\color{blue} \footnotesize $-\bullet$};\node (c1''') at (0.69,0.425)  {\color{blue} \footnotesize $+\bullet$};\node (c1'''') at ($(C2)+(-0.03,-0.03)$)  {\color{blue} \footnotesize $+-$};
\node (ac') at (0.5,0.545)  {\color{blue}\footnotesize $\bullet -$};
\draw[dotted]
  (A) -- (B) -- (C) -- cycle;

\draw[thick]
  (A1) --(A)-- (B) -- (C2) -- (C1) -- (A) -- cycle;

\end{tikzpicture}} \\
\!\!\!&
\raisebox{1.35cm}{$\xmapsto{\quad\enspace }$}&\!\!\!\!\!\!\!\!\!\!\!\!\resizebox{5cm}{!}{\begin{tikzpicture}[scale=4.5]

\def\s{1}

\def\t{0.333333}

\coordinate (A) at (0,0);
\coordinate (B) at (\s,0);
\coordinate (C) at ({\s/2},{0.8660254*\s});

\coordinate (A1) at ($(A)!\t!(B)$);
\coordinate (A2) at ($(A)!\t!(C)$);

\coordinate (B1) at ($(B)!\t!(C)$);
\coordinate (B2) at ($(B)!\t!(A)$);

\coordinate (C1) at ($(C)!\t!(A)$);
\coordinate (C2) at ($(C)!\t!(B)$);
\node (a1) at ($(A)+(-0.075,-0.04)$)  {\footnotesize $2(1,3)$};
\node (b2) at ($(B)+(0.075,-0.04)$)  {\footnotesize $1(2,3)$};
\node (c1) at ($(C1)+(-0.075,0.04)$)  {\footnotesize $3(2(1))$};
\node (c2) at ($(C2)+(0.075,0.04)$)  {\footnotesize $3(1(2))$};
\node (ab) at (0.5,-0.04)  {\footnotesize $\{1,2\}(3)$};
\node (ac) at (0.13,0.45) {\footnotesize $\{2,3\}(1)$};
\node (bc) at (0.87,0.45) {\footnotesize $\{1,3\}(2)$};
\node (d) at (0.5,0.62) {\footnotesize $3(\{1,2\})$};

\node (a1') at ($(A)+(0.12,0.03)$)  {\color{red} \footnotesize $++-$};
\node (b2') at ($(B)+(-0.12,0.03)$)  {\color{red} \footnotesize $+--$}; 
\node (ab') at (0.5,0.03)  {\color{red}\footnotesize $+\bullet -$};
\node (c1') at ($(C1)+(-0.03,-0.03)$)  {\color{red} \footnotesize $+\!+\!+$};\node (c1'') at (0.33,0.425)  {\color{red} \footnotesize $++\bullet$};\node (c1''') at (0.67,0.425)  {\color{red} \footnotesize $+-\bullet$};\node (c1'''') at ($(C2)+(+0.03,-0.03)$)  {\color{red} \footnotesize $+\!-\!+$};
\node (ac') at (0.5,0.545)  {\color{red}\footnotesize $+\bullet +$};
\draw[dotted]
  (A) -- (B) -- (C) -- cycle;

\draw[thick]
  (A1) --(A)-- (B) -- (C2) -- (C1) -- (A) -- cycle;

\end{tikzpicture}}
\end{tabular}
\caption{Interestingly, this shows that the quotient turning the hexagon into a square does not correspond to reversing the truncations (that produce the hexagon from the simplex).  For example, the constructs $1(3(2))$, $3(1(2))$ and $\{1,3\}(2)$ are identified in the quotient, yet they do not arise from a common vertex truncation of the permutohedron.}
\label{formap}
\end{figure}
 
The example given in Figure \ref{formap} is a particular instance of the following lemma.
\begin{lemma}\label{words}
An order-preserving bijection between the constructs of the hypercube and the words of Chapoton is obtained by mapping the constructs to the words of \cite[Example 3.4]{PLBJ1}, followed by deleting the first $+$ of each word, and switching the role of $+$ and $-$ in each word. 
\end{lemma}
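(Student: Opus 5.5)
The plan is to reduce the lemma to a letter-by-letter comparison of the two face orders. Both are orders on words over $\{+,-,\bullet\}$, and both turn out to be generated by the same elementary moves.

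\emph{Bijection.} Write $\mathrm{wd}(T)$ for the word attached in \cite[Example 3.4]{PLBJ1} to a construct $T:\hyper{C}^X$, with $X=\set{x_1<\cdots<x_n}$. These are exactly the words of length $n$ over $\{+,-,\bullet\}$ starting with $+$, and $\mathrm{wd}$ is a bijection onto them. Deleting the first letter is a bijection from these words onto all words of length $n-1$. Exchanging $+$ and $-$ is an involution. Chapoton's words for the hypercube are exactly all words of length $n-1$ over $\{+,-,\bullet\}$, via the surjection $w$. Hence the composite map $\Phi$ is a bijection, and as a sanity check both sides have cardinality $3^{n-1}$. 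I would also check $\Phi$ against Figure \ref{formap} for $n=3$; for instance $3(2(1))\mapsto +{+}{+}\mapsto ++\mapsto --$.

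\emph{Order on Chapoton's side.} The face poset of the hypercube in Chapoton's encoding is the product order on letters. Here $+$ and $-$ are incomparable, and both lie below $\bullet$ (a face with a $\bullet$ in position $i$ contains the two faces obtained by replacing it with $+$ or $-$). So an immediate face relation changes exactly one letter from $\pm$ to $\bullet$, leaving all other letters unchanged. The swap $+\leftrightarrow-$ preserves this order. Therefore it suffices to show that, on our side, an edge contraction of a construct changes exactly one letter of $\mathrm{wd}(T)$ (necessarily not the first, which is always $+$) from $+$ or $-$ to $\bullet$, and conversely that every such letter change is realised by a contraction.

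\emph{Key step and main obstacle.} The main obstacle is making the encoding of \cite{PLBJ1} explicit letter by letter in a form suited to contractions. I would first establish the following characterisation, by induction on $n$ using the recursive definition of constructs. The only connected subsets of $\hyper{C}^X$ are singletons and prefixes $\set{x_1,\ldots,x_i}$. Consequently, for $X\neq Y$, removing the root $Y$ leaves the maximal prefix disjoint from $Y$ (if non-empty) together with the remaining vertices as isolated singletons. From this one reads off, for $i\geq 2$, that the $i$-th letter of $\mathrm{wd}(T)$ records the position of $x_i$ relative to the node containing $\set{x_1,\ldots,x_{i-1}}$'s topmost element in the ``prefix spine'' of $T$:
\begin{itemize}
\item $\bullet$ if $x_i$ lies in the same node;
\item $+$ if $x_i$ lies strictly above it;
\item $-$ if $x_i$ hangs below as a singleton leaf.
\end{itemize}
I would confirm this description against the eight $n=3$ instances in Figure \ref{formap}. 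It is also consistent with the dimension formula, since the number of $\bullet$'s then equals $\sum_X(|X|-1)$.

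With this description in hand, contracting an edge between adjacent nodes $X$ and $Y$ merges them. I would check that this affects precisely the letter of the element of the lower node whose relative position to the spine changes, turning a $+$ or $-$ into $\bullet$. Conversely, splitting a node along a $\bullet$ in either of the two directions yields a valid construct whose word has $+$ or $-$ in that position. Together these show that $\Phi$ and $\Phi^{-1}$ preserve covering relations, hence the orders.
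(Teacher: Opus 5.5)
The paper gives no proof of this lemma. It only displays the case $n=3$ in Figure~\ref{formap} and states the general fact, so there is no argument in the text to compare against. Your plan is a correct way to supply one:
\begin{itemize}
\item bijectivity by composing bijections, with both sides having $3^{n-1}$ elements;
\item matching covering relations, using that $+\leftrightarrow-$ is an automorphism of the letterwise order $\pm<\bullet$.
\end{itemize}
Your letter-by-letter reading is right in substance. A construct of $\hyper{C}^X$ consists of a spine $Y_1,\ldots,Y_k$ from the root to the node containing $x_1$, with strictly decreasing minima, plus singleton leaves. The letter of $x_j$ is $+$ if $x_j=\min Y_l$, $\bullet$ if $x_j\in Y_l$ is not the minimum, and $-$ if $x_j$ is a leaf. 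This reproduces every label in Figure~\ref{formap}. However, you must derive it from the actual (recursive) definition in \cite{PLBJ1}, not just check it at $n=3$.

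One claim in your contraction step is wrong as stated.
\begin{itemize}
\item Contracting a leaf edge $Y_l$--$x_j$ changes the leaf's letter from $-$ to $\bullet$.
\item Contracting a spine edge between $Y_l$ and its spine child $Y_{l+1}$ changes the letter of $\min Y_l$, the node \emph{nearer the root}, from $+$ to $\bullet$, because it stops being a node minimum. The letter of $\min Y_{l+1}$ stays $+$, and all other letters are unchanged.
\end{itemize}
So ``the element of the lower node'' is right for one kind of edge and wrong for the other, whichever way you draw the tree. The paper puts roots at the bottom, while your description puts them at the top; either way one case fails. Once this is corrected, the forward check is immediate.

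For the converse, you do not need to check that splits give valid constructs. That check is not trivial: the $\bullet\to+$ split must redistribute the leaves of $Y_l$ according to whether their index is below or above $j$, which you do not mention. Instead, argue as follows. Take words $u<v$ differing only at position $j$. In $\Phi^{-1}(u)$, contract the leaf edge at $x_j$ if the letter there is a leaf letter, or the spine edge from the node with minimum $x_j$ to its spine child otherwise. By the forward computation the result has word $v$, so it equals $\Phi^{-1}(v)$ by bijectivity.
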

The correspondence of Lemma \ref{words} allows us to compare  the product of Chapoton with our product: 
$$\begin{array}{rcl}
2(1)\ast^{\text{Chap}} 3(4)={\color{blue}{-}}\ast^{\text{Chap}}{\color{blue}{+}} & = &({\color{blue}{-++}})+({\color{blue}{-\bullet+}})+({\color{blue}{--+}})\\
& = & ({\color{red}{++--}})+({\color{red}{++\bullet-}})+({\color{red}{+++-}})\\
&=&2(1,3,4)+\{2,3\}(1,4)+3(2(1),4)\end{array},$$
while 
$$\begin{array}{rcl}
2(1)\ast 3(4)&=&2(1,3,4)-\{2,3\}(1,4)+3(2(1),4),\end{array}$$
where $\ast$ is our product. Therefore, the two products are different. One might wonder whether our product, for the specific case of hypercubes,  also works for $q=1$ (and then try to compare the  two products), but this is not the case: for $q=1$, one can easily check that  $(1\ast 2)\ast 3 \neq 1\ast (2\ast 3)$.

Similar differences appear between our product on cyclohedra and on simplices with respect to those  obtained by Stefan Forcey and Derriell Springfield in \cite{FS} by means of suitable projections.

\medskip
So it seems that the approaches of Chapoton and Forcey-Springfields on the one hand and ours on the other hand have different ``virtues". In our case, we try to have a general framework  that we can instantiate ``individually'' to various  families of polytopes, but we know little about how they relate. In their case, most of their new structures are obtained by projecting a known one from one family to another family, which in fact turns the projection into an algebra morphism.

 \section*{Appendix: A non-inductive definition of restriction}

We give an alternative, non-inductive characterisation of the restriction of constructs. This characterisation is not used elsewhere in the paper, but it   may be of independent interest. We first introduce some preliminary notations.

\smallskip

Given $\hyper{H}$, a construct $S:\hyper{H}$ and $a,b\in H$,  there are unique nodes $X^S_a$ and $X^S_b$ of $S$ such that $a\in X^S_a$ and $b\in X^S_b$, respectively. We write
\begin{itemize}
\item $P_<(a,b,S)$ if  $X^S_a$ is strictly below  $X^S_b$ in $S$,
\item $P_=(a,b,S)$  if $X^S_a=X^S_b$,
\item $P_{<^\#}(a,b,S)$ if  $X^S_a$ is strictly below  $X^S_b$, or if $X^S_a$ and $X^S_b$ are incomparable in $S$, meaning that neither node lies above the other in $S$,
\item $P_{=^\#}(a,b,S)$ if $X^S_a=X^S_b$, or if $X^S_a$ and $X^S_b$ are incomparable in $S$.
\end{itemize}
We note that if $T$ is a subtree of $S$ and if $X^S_a$ and $X^S_b$ are nodes of $T$, then $P_{\ast}(a,b,T)$ holds if and only if 
$P_{\ast}(a,b,S)$ holds, for any $\ast\in\set{<,=,<^\#,=^\#}$.

\begin{proposition} \label{characterisation-restriction}
For a connected hypergraph $\hyper{H}$, a construct $U:\hyper{H}$,  a connected subset $K$ of $\hyper{H}$ and a construct $S:\hyper{K}$ (with $\hyper{K}:={\bf H}_K$),
we have that $S=\restrconstr{U}{K}$ if and only  the following properties hold for all $a,b\in K$:
$$\begin{array}{lll}
(1)\quad P_<(a,b,S) \Rightarrow P_<(a,b,U) && (2)\quad  P_<(a,b,U) \Rightarrow P_{<^\#}(a,b,S) \\
(3) \quad P_=(a,b,S) \Rightarrow P_=(a,b,U)  && (4)\quad  P_=(a,b,U) \Rightarrow P_{=^\#}(a,b,S).
\end{array}$$
\end{proposition}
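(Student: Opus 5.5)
The proof splits into two directions: soundness (the restriction satisfies (1)--(4)) and uniqueness (a construct of $\hyper{K}$ is determined by these four properties). Together they give the equivalence.

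\emph{Soundness.} We show that $S:=\restrconstr{U}{K}$ satisfies (1)--(4) by induction on the size of $\hyper{H}$, following the cases of Definition \ref{restriction-definition}.

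If $U=H$, then $S=K$. Every pair of elements lies in the same node of both constructs, so only $P_=$ holds and all four implications are trivial.

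If $U=X(U_1,\dots,U_n)$ and $X\cap K=\emptyset$, then $S=\restrconstr{(U_i)}{K}$ with $K\subseteq H_i$. Every $a,b\in K$ lies in the subtree $U_i$, so by the remark preceding the proposition the predicates for $U$ and $U_i$ agree. We conclude by induction.

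Otherwise $S=(X\cap K)((U_{\psi(1)})_{\lceil K_1},\dots,(U_{\psi(p)})_{\lceil K_p})$, and we distinguish three subcases.
\begin{itemize}
\item If $a$ or $b$ lies in $X\cap K$, its node is the root in both $S$ and $U$. Hence $P_<$ and $P_=$ involving the root hold in $S$ exactly when they hold in $U$.
\item If $a,b$ lie in the same $K_j$, we use the induction hypothesis for $U_{\psi(j)}$ together with the subtree remark.
\item If $a\in K_j$ and $b\in K_{j'}$ with $j\neq j'$, the two nodes are incomparable in $S$. Then $P_{<^\#}(a,b,S)$ and $P_{=^\#}(a,b,S)$ hold, which is all that (2) and (4) require, while the hypotheses of (1) and (3) fail, so those implications are vacuous.
\end{itemize}
This settles soundness.

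\emph{Uniqueness.} Suppose $S,S':\hyper{K}$ both satisfy (1)--(4) with respect to $U$. We show $S=S'$, which, combined with soundness, forces $S=\restrconstr{U}{K}$. The argument is by induction on $|K|$.

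First we identify the root of $S$, and this is the main obstacle. We claim that the root $Y$ of $S$ is the set of elements $a\in K$ whose node in $U$ is $\le$-maximal among the nodes $X^U_b$, $b\in K$. Equivalently, by the recursive definition, $Y=X\cap K$ for the first node $X$ met while descending in $U$ with $X\cap K\neq\emptyset$.

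The inclusion $\subseteq$ is the easy half. If $a\in Y$, then for every $b\in K$ we have $P_<(b,a,S)$ or $P_=(b,a,S)$, so (1) and (3) give that $X^U_a$ lies weakly above $X^U_b$.

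For $\supseteq$, take $a$ maximal in $U$ and suppose $a\notin Y$. Then $a$ lies in some component $K_j$ of $\hyper{K}\setminus Y$. Pick $y\in Y$. By the first half, $X^U_y$ lies weakly above $X^U_a$; since $a$ is maximal, either $P_=(a,y,U)$ or $P_<(a,y,U)$ holds.
\begin{itemize}
\item If $P_=(a,y,U)$, condition (4) gives $P_{=^\#}(a,y,S)$. But $X^S_y$ is the root, which is comparable to every node, so the nodes must coincide, a contradiction.
\item If $P_<(a,y,U)$, then $X^U_a$ is strictly below $X^U_y$, contradicting the maximality of $a$.
\end{itemize}
Hence $Y$ is exactly the set of $U$-maximal elements, as claimed. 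This characterisation depends only on $U$, so $S$ and $S'$ have the same root $Y$, and therefore the same components $K_1,\dots,K_p$ of $\hyper{K}\setminus Y$.

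It remains to treat the subtrees. The subtrees $S_j$ and $S'_j$ over $K_j$ satisfy (1)--(4) with respect to $U$ restricted to pairs in $K_j$. To apply induction we need properties (1)--(4) relative to $\restrconstr{U}{K_j}$ rather than $U$. By soundness, $\restrconstr{U}{K_j}$ satisfies (1)--(4) with respect to $U$, and by Remark \ref{rest1} it coincides with $(\restrconstr{U}{K})_{\lceil K_j}$.

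The cleaner route is to strengthen the induction statement to: for every connected $L\subseteq K$, any construct $T:\hyper{H}_L$ satisfying (1)--(4) with respect to $U$ is unique. This statement is exactly what the root argument above proves when applied inside $L$, because that argument only compared $T$ with $U$ and never used a relationship between $L$ and a parent construct. Applying the strengthened statement with $L=K_j$ gives $S_j=S'_j$ for every $j$, hence $S=S'$.
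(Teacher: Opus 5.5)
Your proof is correct. The soundness half follows the same induction as the paper. Your third subcase is a tidy way to handle pairs split across different $K_j$: you read off incomparability in $S$, which makes (1) and (3) vacuous and (2) and (4) automatic.

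The converse is where you take a genuinely different route. The paper proves it directly by induction on $U$, mirroring the recursive definition of restriction. If $X\cap K=\emptyset$ it descends into the relevant $U_j$. Otherwise it shows $\mathrm{root}(S)=X\cap K$, using (2) for $\subseteq$ and (1) for $\supseteq$, and then recurses into each $U_{\psi(i)}$ with $K_i$ and $S_i$. That direction is therefore independent of soundness.

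You instead fix $U$, prove by induction on $|K|$ that at most one construct of $\hyper{K}$ satisfies the conditions, and obtain $S=\restrconstr{U}{K}$ only by combining this with soundness. Your route buys three things:
\begin{itemize}
\item an intrinsic, non-recursive description of the root of $\restrconstr{U}{K}$: the elements of $K$ lying in the node of $U$ closest to the root among those meeting $K$;
\item no need to decompose $U$ or track $\psi$ in the converse;
\item since your root argument uses only (1), (3) and (4), the by-product that (2) follows from the other three conditions.
\end{itemize}

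Two minor points.

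First, in the paper \emph{below} means closer to the root, so for $a\in Y$ one has $P_<(a,b,S)$ or $P_=(a,b,S)$, not $P_<(b,a,S)$. Likewise the relevant node of $U$ is minimal rather than maximal. You consistently use the opposite convention. This is harmless, because (1)--(4) are quantified over all ordered pairs and flipping the convention merely swaps $a$ and $b$, but you should align your notation with the paper's definitions.

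Second, your ``strengthened'' induction statement is not actually a strengthening. With $U$ fixed and induction on $|K|$ over all connected $K$, the hypothesis already applies to each $K_j$, since $S_j$ satisfies (1)--(4) with respect to $U$ by the subtree remark. The detour through $\restrconstr{U}{K_j}$ and Remark \ref{rest1} can simply be deleted.
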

\begin{proof} 
We first prove that $\restrconstr{U}{K}$ satisfies the properties (1)--(4), for all $a,b\in K$, by induction on $U$. If $U=H$, then $\restrconstr{U}{K}=K$, and, hence, the implications (1) and (3) are vacuously true (since the antecendants are false), while (2) and (4)   hold trivially given the single-node form of $\restrconstr{U}{K}$ and $U$. Suppose now that ${\bf H},X\leadsto {\bf H}_1,\dots,{\bf H}_n$ and that  $U=X(U_1,\dots,U_n)$, where $U_j:{\bf H}_j$, for each $1\leq j\leq n$. If $X\cap K=\emptyset$, then $\restrconstr{U}{K}=\restrconstr{(U_j)}{K}$ for some $1\leq j\leq n$ and the claim holds by induction on $U_j$ (relatively to $K$).  It remains to prove the claim for the case
$X\cap K\neq \emptyset$. In this case, supposing that ${\bf K},(X\inter K)\leadsto K_1,\dots,K_p$, the restriction has the form 
\begin{equation}
\restrconstr{U}{K}=(X\cap K)((U_{\psi(1)})_{\lceil_{K_1}},\dots,(U_{\psi(p)})_{\lceil_{K_p}})\tag{$\dagger$}\label{UUU}\end{equation}
 with $\psi$ being the appropriate index correspondence.   

We first show (1). Supposing that $P_<(a,b,\restrconstr{U}{K})$, we consider the following two possibilities. 
\begin{itemize}
\item If $a\in X\cap K=\text{root}(\restrconstr{U}{K})$, then $a\in X=\text{root}(U)$. By the assumption, $b$ appears in a node of $\restrconstr{U}{K}$ which is strictly above $\text{root}(\restrconstr{U}{K})$. So, there exists an index $1\leq i\leq p$, such that $b$ appears in a node of $(U_{\psi(i)})_{\lceil_{K_i}}$, meaning that $b\in K_i$. But ${H}_{\psi(i)}\cap X=\emptyset$, implying that ${K}_{i}\cap X=\emptyset$, and hence   $b\not\in X=\text{root}(U)$. Therefore,  $P_<(a,b,U)$ holds.
\item Otherwise, there exists an index $1\leq i\leq p$, such that $a,b\in K_i$. In this case, we conclude by applying induction to 
$U_{\psi(i)}$ (with respect to the connected subset $K_i$ of ${\bf H}_{\psi(i)}$).
\end{itemize}
In order to show (2), suppose that $a,b\in K$ are such that $P_<(a,b,U)$ holds. Once again, we distinguish  two  cases. 
\begin{itemize}
\item If $a\in X=\text{root}(U)$, then $a\in X\cap K=\text{root}(\restrconstr{U}{K})$. By the assumption, $b$ appears in a node of $U$ which is strictly above $\text{root}(U)$, so $b\not\in X$, and consequently $b\notin \text{root}(\restrconstr{U}{K})$, meaning that  $P_<(a,b,\restrconstr{U}{K})$ holds.
\item Otherwise, there exists an index $1\leq j\leq n$ such that $a,b\in H_j$. In this case, there are two possibilities:
\begin{itemize}
\item there exist $i_1\neq i_2\in (\psi)^{-1}(j)$ such that $a\in K_{i_1}$ and $b\in K_{i_2}$; in this case, $a$ and $b$ lie in incomparable nodes of 
$\restrconstr{U}{K}$;
\item there exists $i\in (\psi)^{-1}(j)$ such $a,b\in K_i$; in this case we conclude by induction to $U_j$ (with respect to $K_i$).
\end{itemize}

\end{itemize}
The proofs of (3) and (4) are similar.

\smallskip
In the other direction, we show that if the properties (1)--(4) hold for a construct $S:{\bf K}$, then $S=\restrconstr{U}{K}$.
We proceed again by induction on $U$. $U=H$, then $\restrconstr{U}{K}=K$. Since $S$ satisfies (4) and $P_=(a,b,U)$ holds for all $a,b\in K$, $P_{=^\#}(a,b,S)$ must hold for all $a,b\in K$. Suppose that for some $a,b\in K$, $X^S_a$ and $X^S_b$ are incomparable in $S$, and let $Y$ be the first common ancestor of $X^S_a$ and $X^S_b$ in $S$. For each $y\in Y$, $P_{<}(y,a,S)$ holds, so, since $S$ satisfies (1), $P_<(y,a,U)$ must hold as well. This is in contradiction with the single-node form of $U$, so we conclude that for each $a,b\in K$,  $X^S_a=X^S_b$, i.e., that $S$ is the single-node construct $K=\restrconstr{U}{K}$. Suppose now that ${\bf H},X\leadsto {\bf H}_1,\dots,{\bf H}_n$ and that  $U=X(U_1,\dots,U_n)$, where $U_j:{\bf H}_j$, for each $1\leq j\leq n$.  As in the proof of (1), if $X\cap K=\emptyset$, then $\restrconstr{U}{K}=\restrconstr{(U_j)}{K}$ for some 
$1\leq j\leq n$ and we can apply induction to $U_j$ (with respect to $K$ and $S$). If $X\cap K\neq\emptyset$, then $\text{root}(\restrconstr{U}{K})=X\cap K$. Suppose that $\text{root}(S)=Y$. We  show that  $X\cap K=Y$.  Suppose that there exists an  $a\in K$, such that $a\in Y$ but $a\not\in X\cap K$, i.e., 
$a\not\in X$, and let $b\in X\cap K$. Then $P_<(b,a,U)$, and hence $P_{<^\#}(b,a,S)$ by our assumption, but this is impossible since $b\in \text{root}(S)$.
Suppose now that $a$ witnesses the failure of the other inclusion, i.e.,
$a\in X\cap K$ but $a\not\in Y$. Then, taking $b\in Y$, we have $P_<(b,a,S)$ and hence $P_<(b,a,U)$, contradicting the fact that $a\in \text{root}(\restrconstr{U}{K})$.  Therefore $S$ has the form $(X\cap K)(S_1,\ldots, S_p)$. 
Using the notation of \eqref{UUU},
we can then apply induction 
to each $U_{\psi(i)}$ (with respect to $K_i$ and $S_i$) and conclude 
that  $S_i=\restrconstr{(U_{\psi(i)})}{{K}_i}$ for all $1\leq i\leq p$, i.e., that $S=\restrconstr{U}{K}$. \end{proof}

\printbibliography
 
\end{document}